\providecommand*{\TextOrMath}[2]{\ifmmode#2\else#1\fi}
\newcommand*{\itemtitle}[1]{\textbf{#1:}}
\numberwithin{equation}{deepestsection}
\newcommand{\mynewtheorem}[2]{%
  \newtheorem{#1}{#2}[deepestsection]%
  \expandafter\DeclareRobustCommand\expandafter{\csname#1autorefname\endcsname}{#2}%
  \expandafter\let
    \csname c@#1\expandafter\endcsname
    \expandafter=%
    \csname c@equation\endcsname}
\theoremstyle{plain}
\theoremstyle{definition}
\newcommand*{\ApplyFunctionSymbol}{}
\newcommand*{\ApplyFunction}[2]{\mathinner{{#1}\ApplyFunctionSymbol{%
    \left(#2\right)}}}
\newcommand*{\Pic}[1]{\ApplyFunction{\operatorname{Pic}}{#1}}
\newcommand*{\linebundle}{\mathcal{L}}
\newcommand{\VertexSet}[1]{\mathinner{#1}}
\newcommand*{\FiltrationLevel}[1]{\ApplyFunction{\operatorname{\mathcal{F}}}{#1}}
\newcommand*{\ClassMinCycle}[1]{D\sb{#1}}
\newcommand{\setC}{\mathbb{C}}
\newcommand{\setQ}{\mathbb{Q}}
\newcommand{\setZ}{\mathbb{Z}}
\newcommand*{\size}[1]{\left\lvert#1\right\rvert}
\newcommand*{\cyclegroup}{L}
\newcommand*{\effectivecycles}{\cyclegroup\sb{{} \geq 0}}
\newcommand*{\dualcyclegroup}{\cyclegroup'}
\newcommand*{\ClassOf}[1]{{\left[#1\right]}}
\newcommand*{\Chernclasssymbol}{\mathinner{c\sb{1}}}
\newcommand*{\Chernclass}[1]{\ApplyFunction{\Chernclasssymbol}{#1}}
\newcommand*{\degree}[1]{\delta\sb{#1}}
\newcommand*{\restrictto}[2]{{\left.#1\right|}\sb{#2}}
\newcommand*{\scalarproduct}[2]{\mathinner{\left( #1, #2 \right)}}
\providecommand{\coloneqq}{\mathrel{:=}}
\newcommand*{\twistedbundle}[2]{\mathinner{{#1}%
  \ApplyFunctionSymbol {\left(#2\right)}}}
\newcommand*{\relativebundle}[2]{\mathinner{{#1}\sb{#2}}}
\newcommand*{\bundlefromdivisorsymbol}{\mathcal{O}}
\newcommand*{\bundlefromdivisor}[2][]{\twistedbundle{%
    \relativebundle{\bundlefromdivisorsymbol}{#1}}{#2}}
\newcommand*{\functionring}[1]{\mathinner{{\bundlefromdivisorsymbol}\sb{#1}}}
\newcommand*{\zerocohom}[1]{\ApplyFunction{H\sp{0}}{#1}}
\newcommand*{\firstcohom}[1]{{\ApplyFunction{H\sp{1}}{#1}}}
\newcommand*{\secondhom}[1]{\ApplyFunction{H\sb{2}}{#1}}
\newcommand*{\secondcohom}[1]{\ApplyFunction{H\sp{2}}{#1}}
\newcommand*{\smartfrac}[2]{\mathchoice
              {\frac{#1}{#2}}
  {\mathinner{\left.{#1}\middle/{#2}\right.}}
  {\mathinner{\left.{#1}\middle/{#2}\right.}}
  {\mathinner{\left.{#1}\middle/{#2}\right.}}
}
\newcommand*{\factorsections}[2]{%
  \smartfrac{\zerocohom{#1}}{\zerocohom{#2}}}
\newcommand*{\factorsectionsmodulo}[2]{\factorsections{#1}{%
    \twistedbundle{#1}{#2}}}
\newcommand*{\factorsectionstwotwists}[3]{\factorsections%
  {\twistedbundle{#1}{#2}}%
  {\twistedbundle{#1}{#3}}}
\newcommand*{\factorsectionsfromdivisor}[2]{%
  \factorsections{\bundlefromdivisor{#1}}{\bundlefromdivisor{{#1}-{#2}}}}
\newenvironment{diagram}{\begin{center}}{\end{center}}
\begin{document}

\title{Geometry of splice-quotient singularities}

\date{}

\author{Gábor Braun}
\address{Rényi Institute of Mathematics\\
  Budapest\\
  Reáltanoda u. 13\textendash15\\
  1053\\
 Hungary}
\email{braung@renyi.hu}

\keywords{splice-quotient singularity, End Curve Theorem,
  divisorial filtration}

\subjclass[2000]{Primary 14F05,14J25; Secondary 14C17, 32S05, 32S50}

\begin{abstract}
  We obtain a new important basic result
  on splice-quotient singularities
  in an elegant combinatorial-geometric way:
  every level of
  the divisorial filtration of
  the ring of functions is generated by monomials of
  the defining coordinate functions.
  The elegant way is the language of of line bundles
  based on Okuma's description of
  the function ring of the universal abelian cover.
  As an easy application,
  we obtain
  a new proof of the End Curve Theorem of Neumann and Wahl.
\end{abstract}

\maketitle

\section{Introduction}
\label{sec:introduction}

\subsection{Results and applications}
\label{sec:results-applications}

In this article we consider isolated complex normal surface singularities
whose link is a rational homology sphere,
ie the resolution graph is a tree of projective lines.
A nice subclass of these are formed by
splice-quotient singularities,
which are generalizations of 
weighted-homogeneous singularities
by Neumann and Wahl
\cite[Definition 7.1 \& Theorem 7.2]{NWuj2}.
This class includes
rational singularities and
minimally elliptic singularities by \cite[Theorem 5.1.]{Ouac-c}.
For splice-quotient singularities,
the resolution graph determines
the leading terms of the equations for the universal abelian cover.
Therefore one may expect that the resolution graph determines
many analytical properties of these singularities.
Our aim is to provide a basis for such results.
On the other hand,
we point out in \autoref{sec:count-hilb-funct}
that the Hilbert\textendash Samuel function is \emph{not} determined by
the resolution graph alone.

For every singularity,
vanishing order on the exceptional curves produces
the \emph{divisorial} filtration
on the ring of functions of the universal abelian cover.
Our main result (\autoref{th:06}) is that
the divisorial filtration for splice-quotient singularities
is algebraic ie coming from the equations:
every level is generated by monomials.
As a special case,
this contains Okuma's result \cite[(3.3)]{Opg} about
the filtration defined by the vanishing order of only one
exceptional curve.

This has many interesting consequences.

First of all,
it leads to an easy proof of the
geometric characterization of splice-quotient singularities,
namely, the \hyperref[th:3]{End Curve Theorem~\ref*{th:3}}
\cite[page~2]{neumann-2008}.

Second,
Némethi \cite[Proposition~3.1.4.(1) with Theorem~4.1.1.]{nemethi08.cohomology_splice-quotient}
has used our result to
explicitly compute the dimension of factors of
different levels of the divisorial filtration
and to determine the multiplicity of the singularity among others.
This also gives formulas for the dimension of the cohomology of
an important class of line bundles, which we shall call
\emph{natural} line bundles.
These complement 
the additivity of the geometric genus by Okuma \cite[Theorem~4.5]{Opg}
and the Seiberg\textendash Witten Invariant Conjecture for splice-quotients 
(special case: \cite[Theorem on page~1]{NO1}, general case: \cite[Corollary 2.2.4]{BN}).

\subsection{Method of proof}
\label{sec:method-proof}

We employ a purely geometric approach to splice-quotient singularities,
exploiting only their geometric characterization:
the \hyperref[def:1]{End Curve Condition (\autoref*{def:1})}.
We shall consequently use the language of
line bundles
demonstrating that it is well-suited to
the study of splice-quotient singularities.
Therefore we present the definitions from the literature
rephrased in this language.
(The only exception is
the counterexample in \autoref{sec:count-hilb-funct}, which
involves more deliciate features.)

Thanks to Okuma's description of
the function ring of the universal abelian cover
via natural line bundles,
we can also rephrase our main result on the divisorial filtration
in the language of line bundles:
the essential part is that
factors of sections of natural line bundles are generated by
monomials of the defining coordinate functions.
(\autoref{lem:4}\ref{item:10}).

To ease the inductive proof,
the lemma contains another result
claiming that natural line bundles have many global sections
compared to local ones.
It is interesting that
this second result has become the core of Némethi's cohomology formulas
\cite[Theorem~4.1.1. \& Corollary~4.1.3.]{nemethi08.cohomology_splice-quotient}.

\subsection{Acknowledgements}
\label{sec:acknowledgements}

To a great extent,
this work builds on the article \cite{NWuj2} of Neumann and Wahl,
the articles \cite{Ouac-c,MR2078895,Opg} of Okuma
and a long series of discussion with Némethi
on isolated surface singularities.

\section{Notation and setup}
\label{sec:notation-setup}

\subsection{Resolutions}
\label{sec:resolutions}

Let \((X,o)\) be a complex normal surface singularity whose link
is a rational homology sphere.  Let \(\pi\colon \widetilde{X} \to X\)
be a good resolution with dual graph \(\Gamma\).  Recall that the link
being a rational homology sphere means that \(\Gamma\)
is a tree and all the irreducible exceptional divisors have genus
\(0\).

We will use the same notation for a graph and its set of vertices,
so \(v \in \VertexSet{\Gamma}\) means \(v\) is a vertex of \(\Gamma\).

Let \(\cyclegroup \coloneqq \secondhom{\widetilde{X}, \setZ}\).  It is freely generated by the
classes of the irreducible exceptional curves \({\{E_v\}}_{v \in \VertexSet{\Gamma}}\),
hence \(\cyclegroup\) is also the group of integral divisors
supported on the exceptional curves.
Let \(\dualcyclegroup\) denote \(\secondcohom{\widetilde{X}, \setZ}\).
Via Poincaré duality,
\(\dualcyclegroup\) is the dual of \(\cyclegroup\),
so it is freely generated by the duals \(E_v^*\) of the \(E_v\),
determined by \(\scalarproduct{E_v^*}{E_v} = - 1\) and
\(\scalarproduct{E_v^*}{E_w} = 0\) for \(v \neq w\).

The intersection form \(\scalarproduct{-}{-}\) on \(\cyclegroup\) provides an
embedding \(\cyclegroup \rightarrowtail \dualcyclegroup\) with factor the first homology group \(H\)
of the link \(\partial \widetilde{X}\).
We denote by \(\ClassOf{l'}\) the image of \(l' \in \dualcyclegroup\) in \(H\).
The intersection form \(\scalarproduct{-}{-}\) extends to \(\dualcyclegroup\).

The above embedding \(\cyclegroup \rightarrowtail \dualcyclegroup\)
induces an isomorphism \(\cyclegroup \otimes \setQ \simeq \dualcyclegroup \otimes \setQ\) and hence
realizes \(\dualcyclegroup\) as a subgroup of \(\cyclegroup \otimes \setQ\) .
The \(\setQ\) vector space \(\cyclegroup \otimes \setQ\) has a natural partial ordering:
the elements greater than or equal to \(0\) are the elements
with non-negative coefficients in the base of the \(E_v\).
By restriction to subgroups,
this provides a partial ordering on \(\cyclegroup\) and \(\dualcyclegroup\).
Let \(\effectivecycles\)
denote the semigroup of elements greater than or equal to \(0\)
of \(\cyclegroup\).
In other words,
\(\effectivecycles\) is the set of \emph{effective} divisors
supported on the exceptional curves.

Let
\(\theta\colon H \to \widehat{H}\)
be the isomorphism \(\ClassOf{l'} \mapsto e^{2 \pi i \scalarproduct{l'}{\cdot}}\)
of \(H\) with its Pontrjagin dual \(\widehat{H}\).

\subsection{Natural line bundles and the divisorial filtration}
\label{sec:natural-bundles-divisorial-filtration}

\subsubsection{Natural line bundles}
\label{sec:natural-line-bundles}

It is well-known, see \cite[3.1]{Line} or \cite[Lemma~2.2]{MR2078895},
that the first Chern class mapping from the Picard group
to the second cohomology group is onto
and has a group section \(\bundlefromdivisorsymbol\) whose image
contains the line bundles associated to
divisors supported on the exceptional curves.
The image is actually unique.
(Here is an easy, totally algebraic proof of the existence and uniqeness of the section.
The claim is equivalent to the unique splitting of the induced short exact sequence
\(
\firstcohom{\widetilde{X}, {\bundlefromdivisorsymbol}_{\widetilde{X}}}
\rightarrowtail
\smartfrac{\Pic{\widetilde{X}}}{\cyclegroup}
\twoheadrightarrow
\smartfrac{\dualcyclegroup}{\cyclegroup}
\).
This sequence really splits uniquely as it is an extension of
a torsion-group by a torsion-free divisible group.)
\begin{diagram}
\begin{tikzpicture}
  \begin{scope}[start chain,every node/.style={on chain},
    every join/.style=->]
    \node [join] (start0) {\(0\)};
    \node [join] (discrepancy)
    {\(\firstcohom{\widetilde{X},
        {\bundlefromdivisorsymbol}_{\widetilde{X}}}\)};
    \node [join] (Picard group) {\(\Pic{\widetilde{X}}\)};
    \node  (cohomology2) {\(\dualcyclegroup\)};
    \node [join] (end0) {\(0\)};
  \end{scope}
  \draw [->]
    ($ (Picard group.east) + (0,.5ex) $) to node[auto] {\(\Chernclasssymbol\)}
    ($ (cohomology2.west) + (0,.5ex) $);
  \draw [->,dashed]
     ($ (cohomology2.west) - (0,.5ex) $) to node[auto]
        {\(\bundlefromdivisorsymbol\)}
     ($ (Picard group.east) - (0,.5ex) $);

  \node (cycles) [above=of cohomology2] {\(\cyclegroup\)}
  edge [->] (cohomology2)
  edge [->,dashed] (Picard group);
\end{tikzpicture}
\end{diagram}
We shall call the line bundles of this subgroup
as \emph{natural} line bundles.
They are important
eg in the study of the universal abelian cover
but have so far no name in the literature.

\subsubsection{Eigen-decomposition of the universal abelian cover}
\label{sec:eigen-decomp-univ}

Natural line bundles appear in the eigen-decomposition of the function ring of
the universal abelian cover.

Let \(c\colon (Y,o)\to (X,o)\) be the universal abelian cover of \((X,o)\).
Let \(\pi_Y \colon \widetilde{Y}\to Y\) be the (normalized) pullback of \(\pi\) by \(c\),
and let
\(\widetilde{c}\colon \widetilde{Y} \to \widetilde{X}\) be the morphism
covering \(c\).
\begin{diagram}
\begin{tikzpicture}
  \matrix [matrix of math nodes, column sep=2em, row sep=2em]{
    |(rescover)| \widetilde{Y} & |(res)| \widetilde{X} \\
    |(cover)| (Y,o)            & |(sing)| (X,o)        \\
  };
    \draw [->,auto] (res)      to node {\(\pi\)}            (sing);
    \draw [->,auto] (cover)    to node {\(c\)}             (sing);
    \draw [->,auto] (rescover) to node {\(\pi_Y\)}           (cover);
    \draw [->,auto] (rescover) to node {\(\widetilde{c}\)} (res);
\end{tikzpicture}
\end{diagram}
Then the action of \(H\) on \((Y,o)\) lifts to \(\widetilde{Y}\), and
one has the following eigenspace decomposition
(\cite[Lemma~3.5]{Opg}, \cite[(3.7)]{Line}).
The eigenspaces are parametrized by a subset of \(\dualcyclegroup\)
\begin{equation}
  \label{eq:1}
  Q \coloneqq \left\{ \sum l'_i E_i \in \dualcyclegroup \middle|
    0 \leq l_i' < 1 \right\}.
\end{equation}
The actual decomposition is
\begin{equation}
  \label{eq:2}
  {\widetilde{c}}_* \ApplyFunctionSymbol \functionring{\widetilde{Y}} =
  \bigoplus_{l'\in Q} \bundlefromdivisor{-l'}.
\end{equation}
Here and below a line bundle \(\linebundle\) on the right-hand side is
the \(\ApplyFunction{\theta}{- \ClassOf{\Chernclass{\linebundle}}}\)-eigenspace of the left-hand side.
More generally, one has
\begin{equation}
  \label{eq:21}
  {\widetilde{c}}_* \ApplyFunctionSymbol {\widetilde{c}}^*
  \ApplyFunctionSymbol \linebundle =
  \bigoplus_{l'\in Q} \linebundle \otimes \bundlefromdivisor{-l'}
  \quad\text{for every line bundle \(\linebundle\) over \(\widetilde{X}\)}.
\end{equation}

The class \(Q\) is a representative set of \(\smartfrac{\dualcyclegroup}{\cyclegroup}\).
For any \(l' \in \dualcyclegroup\),
let \(\ClassMinCycle{l'} \in Q\) be the unique
element of \(Q\) representing \(l' + \cyclegroup\).
Thus all the eigenspaces of the ring
\({{\widetilde{c}}_* \functionring{\widetilde{Y}}}\)
have the form \(\bundlefromdivisor{- \ClassMinCycle{l'}}\).
The multiplication in the ring is given by inclusion of line bundles
\(\bundlefromdivisor{-l'_1} \otimes \bundlefromdivisor{-l'_2} \simeq
\bundlefromdivisor{-l'_1 -l'_2} \hookrightarrow
\bundlefromdivisor{-\ClassMinCycle{l'_1 + l'_2}}\).

\begin{remark}
  \label{rem:3}
  We have deliberately defined
  the line bundles \(\bundlefromdivisor{-l'}\)
  and the homomorphisms
  appearing in the eigen-decomposition and multiplication
  only up to isomorphism,
  ie up to a multiplication by an invertible function.
  This will suffice for our purposes,
  since we will be interested only in divisors of sections.

  It is possible to define
  these exactly,
   but this involves only arbitrary choices
   and no new real relation.
\end{remark}

\begin{remark}
  \label{rem:2}
  The action of \(H\) on
  the universal abelian cover differs by a sign
  between the sources \cite{Opg} and \cite{NWuj2}.
  As we make more direct use of formulas from the former,
  we have adopted its sign convention.
\end{remark}

\subsubsection{Divisorial filtration}
\label{sec:divis-filtr}

For a resolution \(\pi\) of an isolated local surface singularity,
the \emph{divisorial filtration}
of the function ring \(\functionring{Y,o}\)
of the universal abelian cover
is defined as in \cite[(4.1.1)]{CDGb}):
the levels of the filtration are indexed by
\(l' \in \dualcyclegroup\) with \(l' \geq 0\) and
the level at \(l'\) is
\begin{equation}\label{eq:3}
  \begin{split}
    \FiltrationLevel{l'} \coloneqq
    \{ f \in \functionring{Y,o} \mid
    \ApplyFunction{\operatorname{div}}{f \circ \pi_Y} \geq \ApplyFunction{{\widetilde{c}}^*}{l'} \} &=
    \zerocohom{\widetilde{Y}, \bundlefromdivisor{- \ApplyFunction{{\widetilde{c}}^*}{l'}}} \\
    &\simeq \bigoplus_{l \in Q} \zerocohom{\widetilde{X}, \bundlefromdivisor{- l' - l}}.
  \end{split}
\end{equation}
Recall eg from \cite[(3.3)]{Line} that
for any \(l'\in \dualcyclegroup\), the pullback \(\ApplyFunction{{\widetilde{c}}^*}{l'}\) is
an \emph{integral cycle},
and hence is uniquely represented by a divisor
supported on \(\ApplyFunction{\pi_Y^{-1}}{o}\).
The notation \(\ApplyFunction{{\widetilde{c}}^*}{l'}\) means this divisor.

We see that for \(l'_1 \geq l'_2\),
the eigenspaces of the factor \(\smartfrac{\FiltrationLevel{l'_1}}{\FiltrationLevel{l'_2}}\)
have the form
\(\factorsectionsmodulo{\linebundle}{-l}\)
for some natural line bundle \(\linebundle\) and
effective divisor \(l \in \effectivecycles\).

\subsection{End curves and end curve sections}
\label{sec:end-curves-sect}

We define the fundamental geometric tool of splice-quotient singularities:
end curves
and their counterpart in the language of line bundles:
end curve sections.
\begin{definition}[End Curve Condition]
  \label{def:1}
  Let a good resolution of
  an isolated normal surface singularity
  be given.
  Let us consider an irreducible curve on it
  which intersects exactly one of the exceptional curves,
  and which intersects the exceptional curve transversally and
  in exactly one point (these are called transversal cuts).
  The curve is an \emph{end curve} of
  the exceptional curve it intersects
  if it is a divisor of a natural line bundle.
  An \emph{end curve section} of an end curve
  is a section of a natural line bundle
  having the end curve as a divisor.
  (The natural line bundle is obviously unique up to isomorphism.)

  The \emph{End Curve Condition} for the resolution
  is that every exceptional curve
  is intersected by exceptional curves and end curves
  in \emph{at least two} different points.
\end{definition}

For end curves of a vertex \(v\),
their end curve sections are sections of \(\bundlefromdivisor{- E_v^*}\).
Using \eqref{eq:2},
via the inclusion
\(\bundlefromdivisor{- E_v^*} \subseteq
\bundlefromdivisor{- \ClassMinCycle{E_v^*}} \subseteq \functionring{Y,o}\),
we shall regard the end curve sections as eigen-vectors of the function ring of
the universal abelian cover \((Y,o)\).

Recall that
every splice-quotient singularity satisfies
the \hyperref[def:1]{End Curve Condition}
by design (\cite[Theorem~7.2(6)]{NWuj2}).
We recall the construction in \hyperref[sec:defin-splice-quot]{the next section}.

\begin{remark}
  \label{rem:4}
  The \hyperref[def:1]{End Curve Condition} is usually formulated as
  every exceptional curve of degree \(1\) has
  at least one end curve intersecting it in a point
  different from that of its neighbour exceptional curve.
  Our formulation is equivalent to this one,
  and is better suited to the spirit of the present article.
  Note that graphs with only \(1\) vertex are rational,
  and hence they have the required end curves.
\end{remark}

\begin{remark}
  \label{rem:1}
  Instead of being a divisor of a natural line bundle,
  the original definition for an end curve required
  the equivalent condition stating
  the existence of a non-constant function
  whose divisor is supported on the exceptional divisor and the curve.
  Such a function is called an \emph{end curve function} for the curve.

  So far in the literature,
  end curves were defined only for ends, ie exceptional curves
  having degree one in the resolution graph.
  Since we shall use end curves of other exceptional curves also
  (in the proof of Proposition~\ref{prop:end-to-monom}),
  we consider less confusing
  to extend the notion of end curve than to
  introduce a new name.
  End curves at other exceptional curves appear naturally anyway,
  for example when passing to the restriction to a subgraph,
  see \autoref{lem:1}\ref{item:9}.
\end{remark}

\begin{remark}\label{rem:31}
  The \hyperref[def:1]{End Curve Condition} is really
  a property of the resolution.
  There are singularities which have a resolution
  satisfying the \hyperref[def:1]{End Curve Condition}
  and also a resolution
  not satisfying it.
  The two resolutions can even have the same graph.

  However, blowdowns preserve the \hyperref[def:1]{End Curve Condition},
  so if a resolution of a singularity
  satisfies the End Curve Condition,
  then so does the minimal resolution.
\end{remark}

\section{Definition of splice-quotient singularities}
\label{sec:defin-splice-quot}

Splice-quotient singularities are defined as
the result of the following construction.
The spirit of the construction is that
we want to construct a singularity
with a given resolution graph together 
with a given collection of end curves 
demonstrating the \hyperref[def:1]{End Curve Condition},
using end curve sections as
coordinates for the universal abelian cover.

The construction is recalled from \cite[Definition~7.1]{NWuj2}
with some reformulations from \cite[Section~2.2]{Opg}
and a slight generalization to encompass our
extended notion of end curves.

For the convenience of the reader,
we shall include the geometric meaning of every notion
in parenthesis below.
Of course,
these parenthesised expressions are not part of the formal definitions.

The construction starts with the following data:
a tree \(\Gamma\) with at least \(3\) vertices
(the tree of all exceptional curves and some end curves)
together with integers at vertices of degree at least \(2\),
such that the subtree of vertices of degree at least \(2\)
is negative definite.
(Vertices with degree \(1\) are end curves,
the other vertices are exceptional curves.)

Let \(\cyclegroup \coloneqq \bigoplus_{u\colon \degree{u}\geq2} E_u\) be the group
generated by the vertices of degree at least \(2\) together with
the negative definite symmetric bilinear map \(\scalarproduct{-}{-}\)
induced by the numbers at these vertices.
Let
\(\dualcyclegroup\) be the dual of \(\cyclegroup\),
which is freely generated by the duals \(E_v^*\) of the \(E_v\),
defined via
\(\scalarproduct{E_v^*}{E_v} = - 1\) and
\(\scalarproduct{E_v^*}{E_u} = 0\) for \(v \neq u\).
(The groups \(\dualcyclegroup\) and \(\cyclegroup\)
are the second homology group and second cohomology group of the
resolution, respectively.
The elements \(E_v\) and \(E_v^*\) are the canonical generators,
as introduced in \autoref{sec:resolutions}.)
Let \(H \coloneqq \smartfrac{\dualcyclegroup}{\cyclegroup}\)
(the first homology group of the link).

For every vertex \(w\) of degree \(1\),
let \(z_w\) be a variable
(an end curve section of \(w\)).
Furthermore,
let \(E_w^* \coloneqq E_{u}^*\),
where \(u\) is the unique neighbour of \(w\).

In \autoref{sec:degr-monom-cond},
we make some definitions,
which will be used in the construction
in \autoref{sec:constr-splice-quot}.

\subsection{Degrees and Monomial Condition}
\label{sec:degr-monom-cond}

We introduce a rudimentary form of
notions related to divisors of functions, ie power series in the \(z_w\).

\begin{definition}\label{def:5}
  For every vertex \(v\),
  the \emph{\(v\)-degree} of
  a monomial \(\prod_{i} z_i^{\alpha_i}\) is
  \begin{equation}\label{eq:25}
    - \sum_i \alpha_i \scalarproduct{E_v^*}{E_i^*}.
  \end{equation}
  The \emph{\(v\)-degree} (vanishing order on \(E_v\)) of a power series is
  the minimum of the \(v\)-degrees of
  the monomials having non-zero coefficient.
\end{definition}

\begin{remark}\label{rem:deg}
  In general, the \(v\)-degree is a non-negative rational number.
  In the literature, there is an additional constant factor in the definition
  to make it an integer.
  We prefer our choice, because it will make the \(v\)-degree of a function
  to be its vanishing order on \(E_v\).
\end{remark}

\begin{remark}
  In the literature,
  the term \(v\)-degree, \(v\)-order and \(v\)-weight are used
  in the same meaning.
  For clarity,
  we use only \(v\)-degree.
\end{remark}

\begin{definition}\label{def:4}
  For any tree \(\Gamma\)
  we define the following.
  \begin{enumerate}
  \item\label{item:20}
    The \emph{branches} of a vertex \(v\) are the components of \(\Gamma \setminus v\).
    The \emph{variables} of a branch are
    the variables \(z_w\) of the vertices \(w\) of the
    component
    having degree \(1\) in \(\Gamma\).
  \item\label{item:21} \itemtitle{Monomial Condition
      \cite[Condition~3.3]{Ouac-c}, \cite[Definition~2.4]{Opg}}
    The \emph{Monomial Condition} for a branch of a vertex \(v\) is
    the existence of a non-negative integer \(\alpha_w\) 
    for every vertex of the branch
    such that
    \begin{equation}
      \label{eq:24}
      \sum_{w\colon \degree{w}=1} \alpha_w E_w^* - \sum_{w\colon \degree{w} \geq 2} \alpha_w E_w = E_v^*,
    \end{equation}
    where \(\delta_w\) is the degree of \(w\) in \(\Gamma\).
    For such \(\alpha_w\),
    the monomial \(\prod_{w\colon \degree{w}=1} z_w^{\alpha_w}\) is
    \emph{admissible} for the branch.
    (In other words,
    \(\prod_{w\colon \degree{w}=1} z_w^{\alpha_w}\) is admissible if
    it is a holomorphic section of
    \(\bundlefromdivisor{- E_v^*}\)
    with divisor supported on the branch
    via the inclusions
    \(\zerocohom{\bundlefromdivisor{- E_v^*}} \subseteq 
    \zerocohom{\bundlefromdivisor{- \ClassMinCycle{E_v^*}}} \subseteq 
    \functionring{Y,o}\).
    The divisor of
    \(\prod_{w\colon \degree{w}=1} z_w^{\alpha_w}\) in
    \(\bundlefromdivisor{E_v^*}\)
    is \(\sum_{w\colon \degree{w}=1} \alpha_w w - \sum_{w\colon \degree{w} \geq 2} \alpha_w E_w\)
    where in the first sum \(w\) also stands for the end curve it represents.)

    The \emph{Monomial Condition} for a negative definite tree is that
    for every vertex having at least \(3\) branches,
    all of its branches
    satisfy the Monomial Condition.
  \end{enumerate}
\end{definition}

The \(v\)-degree of all admissible monomials of
the branches of \(v\) is
\(- \scalarproduct{E_v^*}{E_v^*}\)
as easily seen from~\eqref{eq:24}.

\subsection{Construction of splice-quotient singularities}
\label{sec:constr-splice-quot}

We are ready to recall the construction of splice-quotient singularities.
For the reader's convenience,
we recall the given data and some notation
from the beginning of \hyperref[sec:defin-splice-quot]{this section}.
\begin{definition}[{Splice diagram equations \cite[Definition~7.1]{NWuj2}}]
  \label{def:6}
  Let \(\Gamma\) be a tree with at least \(3\) vertices
  such that the subtree of vertices of degree at least \(2\)
  is negative definite.
  Let \(\Gamma\) satisfy the \hyperref[item:21]{Monomial Condition}.
  We shall define equations in the ring of convergent power series in the
  variables \(z_w\) using the notations introduced in this section.

  First, we define the action of the group \(H\) of \(\Gamma\) on the ring via
  \begin{equation}\label{eq:14}
    \ClassOf{E_v^*} \cdot z_w \coloneqq e^{- 2 \pi i \cdot \scalarproduct{E_v^*}{E_w^*}} z_w,
  \end{equation}
  ie \(z_w\) is a \(\ApplyFunction{\theta}{\ClassOf{E_w^*}}\)-eigenvector.

  We make some arbitrary choices.
  We select an admissible monomial \(M_{v,C}\)
  for every branch \(C\) of every vertex \(v\) with at least \(3\) branches.
  We select complex numbers \(a_{v,i,C}\) for \(1 \leq i \leq \degree{v} - 2\) such that
  for every \(v\),
  all the maximal minors of the matrix \((a_{v,i,C})\) have full rank
  (ie are non-degenerate).
  Finally, we choose convergent power series \(H_{v,i}\)
  for \(1 \leq i \leq \degree{v} - 2\), which are
  \(\ApplyFunction{\theta}{\ClassOf{E_v^*}}\)-eigenvectors of the \(H\)-action and
  have \(v\)-degree greater than \(- \scalarproduct{E_v^*}{E_v^*}\),
  ie greater than the \(v\)-degree of the \(M_{v,C}\).
  The \emph{splice diagram equations} are the equations
  for every vertex \(v\) with at least \(3\) branches:
  \begin{equation}
    \label{eq:9}
    \sum_C a_{v,i,C} M_{v,C} + H_{v,i} = 0, \quad i = 1, \dotsc, \degree{v} - 2,
  \end{equation}
  where \(C\) runs over the branches of \(v\).

  The splice diagram equations define a singularity with an \(H\)-action.
  The factor of the singularity by the \(H\)-action is
  the result of the construction.
  Singularities arising this way are called
  \emph{splice-quotient} singularities.
\end{definition}

\begin{theorem}[{\cite[Theorem~7.2]{NWuj2}}]
  \label{th:6}
  Let \(\Gamma\) be a tree with at least \(3\) vertices
  such that the subtree of vertices of degree at least \(2\)
  is negative definite.
  Let \(\Gamma\) satisfy the \hyperref[item:21]{Monomial Condition}.

  Then the splice diagram equations of \(\Gamma\)
  define an isolated complete surface singularity
  with \(H\) acting freely on it outside the origin.
  It is the universal abelian cover of its factor by \(H\), ie of the
  splice-quotient singularity resulting from the construction.
  The factor is an isolated surface singularity with
  a good resolution whose resolution graph is
  the subtree of \(\Gamma\) consisting of
  vertices of degree at least \(2\).
  Every \(1\)-degree vertex of \(\Gamma\) is an end curve
  whose variable is an end curve section of it.
\end{theorem}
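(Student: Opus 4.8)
The plan is to cite the relevant theorems of Neumann and Wahl \cite{NWuj2} and to explain, in the language of this paper, why the statement follows. First I would recall from \cite[Theorem~7.1]{NWuj2} that the splice diagram equations of \(\Gamma\) define an isolated complete intersection surface singularity \((Y,o)\) of embedding dimension equal to the number of degree-one vertices, with the prescribed \(H\)-action: the equations \eqref{eq:9} are \(\theta(\ClassOf{E_v^*})\)-eigenvectors by the choice of weights in \eqref{eq:14} and of the \(H_{v,i}\), so the zero locus is \(H\)-invariant. The freeness of the \(H\)-action away from the origin and the identification of \((Y,o)\) as the \emph{universal abelian} cover of the quotient \((X,o) \coloneqq (Y,o)/H\) is then exactly \cite[Theorem~7.2]{NWuj2}; I would remark that the Monomial Condition together with the full-rank hypothesis on the matrices \((a_{v,i,C})\) is precisely what makes this work, via the leading-term analysis in that paper.

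Next I would address the resolution. By \cite[Theorem~7.2]{NWuj2}, \((X,o)\) admits a good resolution whose dual graph is obtained from \(\Gamma\) by deleting the degree-one vertices, i.e.\ the negative-definite subtree of vertices of degree at least \(2\); negative-definiteness of this subtree is guaranteed by hypothesis and is what allows it to be a resolution graph. The point to verify is that the deleted degree-one vertices \(w\) correspond to \emph{end curves}: the variable \(z_w\) is, by construction, a \(\theta(\ClassOf{E_w^*})\)-eigenvector with \(E_w^* = E_u^*\) for the unique neighbour \(u\), hence via the eigen-decomposition \eqref{eq:2} it sits inside \(\bundlefromdivisor{-E_u^*}\) as a section of a natural line bundle, and on the resolution its zero divisor meets the exceptional set transversally in a single point of \(E_u\). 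By \autoref{def:1} this exactly says \(z_w\) is an end curve section of an end curve of \(u\), so each degree-one vertex is an end curve with variable an end curve section of it. (The degree-\(2\) hypothesis on branches in the Monomial Condition, and the admissibility of the \(M_{v,C}\), are what tie the combinatorial bookkeeping of \(v\)-degrees in \autoref{def:5} to actual vanishing orders on the exceptional curves.)

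The main obstacle is that essentially all the analytic content—that the splice diagram equations cut out a normal isolated singularity of the right dimension, that the quotient has the asserted resolution, and that the cover is the \emph{universal} abelian one rather than merely \emph{an} abelian cover—is genuinely deep and is not reproved here; the honest approach is to quote \cite[Theorem~7.2]{NWuj2} for all of this and restrict the new work to the translation into the line-bundle language of \autoref{sec:natural-bundles-divisorial-filtration}, chiefly checking that the \(H\)-eigenweights \eqref{eq:14} match \(\theta(\ClassOf{E_w^*})\) and hence that the inclusion \(\bundlefromdivisor{-E_w^*} = \bundlefromdivisor{-E_u^*} \subseteq \bundlefromdivisor{-\ClassMinCycle{E_u^*}} \subseteq \functionring{Y,o}\) realizes \(z_w\) as an end curve section. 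A secondary subtlety, worth a sentence, is the sign convention: following \autoref{rem:2} the action here uses Okuma's sign, which is why \eqref{eq:14} carries \(e^{-2\pi i \scalarproduct{E_v^*}{E_w^*}}\) rather than the opposite sign of \cite{NWuj2}; once this is aligned, the identification of \(z_w\) with the \(\theta(\ClassOf{E_w^*})\)-eigenspace is immediate and the theorem follows.
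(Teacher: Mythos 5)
Your approach is essentially the one the paper takes: the paper does not prove \autoref{th:6} itself but attributes it to \cite[Theorem~7.2]{NWuj2}, and your translation into the line-bundle language (identifying the \(H\)-eigenweights of the variables with \(\theta(\ClassOf{E_w^*})\) so that \(z_w\) is a section of \(\bundlefromdivisor{-E_u^*}\), and noting the sign flip against \cite{NWuj2}) is exactly the dictionary the paper relies on implicitly. Your observation that the full-rank minors hypothesis and the Monomial Condition are the load-bearing hypotheses in Neumann--Wahl's argument is also on point.

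There is one genuine gap: you treat \cite[Theorem~7.2]{NWuj2} as if it literally asserts the statement as formulated here, but it does not. Neumann and Wahl state their theorem only for \emph{quasi-minimal} resolution graphs, and with exactly \emph{one} end curve attached at each degree-one vertex; the present paper deliberately works with an arbitrary good resolution and allows several end curves at a vertex (its extended notion of end curve, cf.\ \autoref{rem:1}). The paper acknowledges this explicitly in the remark immediately following the theorem: "Actually, \cite[Theorem~7.2]{NWuj2} claims the above only for quasi-minimal resolution graphs \dots with one end curve at every vertex of degree \(1\). However, the proof can be easily extended to this general case." Your proposal never addresses this mismatch, and so, as written, it proves a narrower statement than the one asserted. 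A correct write-up in the paper's spirit should either cite the extension explicitly and indicate (even if only in a sentence) why adding redundant end curves or blowing up does not disturb the Neumann--Wahl argument, or else reduce the general case to the quasi-minimal, one-end-curve-per-leaf case by a blow-down and forgetting argument. Everything else in your proposal is fine and indeed clarifies details (eigenweights, sign conventions) that the paper leaves to the reader.
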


\begin{remark}
  Actually, \cite[Theorem~7.2]{NWuj2} claims the above only for
  quasi-minimal resolution graphs
  (a technical modification of minimal resolution graphs)
  with one end curve 
  at every vertex of degree \(1\).
  However,
  the proof can be easily extended to this general case.
\end{remark}

The weak equisingularity type of
the constructed splice-quotient singularity
depends only on the graph.
Independence on the choice of the \(H_{v,i}\) and  \(a_{v,i,C}\)
are easy, see eg \cite[Theorem~1.1]{Ouac-c}.
Independence on the choice of admissible monomials
is stated in \cite[Theorem~10.1]{NWuj2}.

\section{Main results}
\label{sec:geom-prop-splice}

We formulate our main results on
splice-quotient singularities.

\begin{theorem}\label{th:06}
  For splice-quotient singularities,
  every level of
  the  divisorial filtration
  is generated by monomials of the defining coordinate functions
  (the \(z_w\) in \autoref{def:6}).
\end{theorem}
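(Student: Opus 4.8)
The plan is to reduce Theorem~\ref{th:06} to the key technical result on natural line bundles announced in the introduction, namely \autoref{lem:4}\ref{item:10}. By the eigen-decomposition \eqref{eq:2} and the description \eqref{eq:3} of the divisorial filtration, each level $\FiltrationLevel{l'}$ decomposes as a direct sum over $l\in Q$ of spaces $\zerocohom{\widetilde{X},\bundlefromdivisor{-l'-l}}$, and the factors $\smartfrac{\FiltrationLevel{l'_1}}{\FiltrationLevel{l'_2}}$ decompose into eigenspaces of the form $\factorsectionsmodulo{\linebundle}{-l}$ for a natural line bundle $\linebundle$ and an effective cycle $l\in\effectivecycles$. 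So it suffices to show that each such eigenspace of each factor is spanned by (images of) monomials in the $z_w$ — which is exactly the content of the ``essential part'' flagged in \autoref{sec:method-proof}. Thus the first step is purely formal bookkeeping: translate ``generated by monomials'' for the filtration into the statement that $\zerocohom{\widetilde{X},\bundlefromdivisor{-l'}}$ is spanned by monomials for every natural line bundle and every $l'\geq 0$, compatibly with the inclusions coming from increasing $l'$.

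The substantive work is then an induction, and I expect it to be organized along two axes. First I would set up an induction on the tree $\Gamma$: restricting to a subgraph (as in \autoref{lem:1}\ref{item:9}, which is cited for exactly this purpose) turns a vertex adjacent to the removed part into a vertex carrying end curves, and the \hyperref[def:1]{End Curve Condition} is inherited. This lets one peel off branches and reduce the global statement to statements about smaller graphs together with the ``local'' contribution along one vertex. Second, at a fixed graph one inducts on the cycle, increasing $l'$ by one exceptional curve $E_v$ at a time; the increment $\factorsectionsfromdivisor{l'}{E_v}$ is a space of sections on the single $\mathbb P^1$ that is $E_v$, and here the end curve sections of $v$ together with the splice diagram equations \eqref{eq:9} must be used to produce enough monomials. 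The admissible monomials $M_{v,C}$ and the full-rank condition on the matrix $(a_{v,i,C})$ are precisely what guarantee that, modulo the ideal of equations, the ``forbidden'' high-degree monomials can be rewritten in terms of lower ones, so that monomials already suffice on the quotient.

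The main obstacle — and the heart of the proof — is the base/step case where one must show that a natural line bundle $\bundlefromdivisor{-E_v^*}$ (or more generally $\bundlefromdivisor{-l'}$ with $l'$ on the boundary of the region controlled by one vertex) has \emph{enough} global sections: that the map from global sections to sections on a neighbourhood of $E_v$ is surjective onto the part cut out by monomials. This is why \autoref{lem:4} must bundle together the monomial-generation claim and a statement that natural line bundles have many global sections compared with local ones — the two feed each other in the induction, and neither can be proved alone. Concretely, one shows that an end curve section of each neighbouring branch (which exists by the End Curve Condition) restricts to a section with the right divisor on $E_v$, while the equations \eqref{eq:9} kill any discrepancy of too-high $v$-degree; combining a global section with such a local correction, and invoking the inductive hypothesis on the smaller graphs obtained by removing branches, yields the required global monomial section. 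Once this surjectivity is established at every vertex, the two inductions close and Theorem~\ref{th:06} follows. I would therefore structure the paper so that \autoref{lem:4} is stated with both halves and proved by simultaneous induction, with Theorem~\ref{th:06} an immediate corollary of part~\ref{item:10}.
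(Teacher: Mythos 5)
Your proposal correctly identifies the overall architecture — the eigen-decomposition reduces everything to factors $\factorsectionsmodulo{\linebundle}{-l}$ of natural line bundles, the heart is a Main Lemma with two halves proved by a simultaneous induction, and the halves feed each other. That much matches \autoref{lem:4} and its proof. But there are two places where the proposal diverges from what actually has to be done, and both matter.

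First, the passage from \autoref{lem:4}\ref{item:10} to \autoref{th:06} is not ``purely formal bookkeeping'' and it is not an ``immediate corollary.'' \autoref{lem:4}\ref{item:10} gives monomial generators for the \emph{finite-dimensional} factors $\factorsectionsmodulo{\linebundle}{-l}$, whereas \autoref{th:06} asserts that the infinite-dimensional ideals $\FiltrationLevel{l'}$ themselves are generated by monomials. Bridging this gap requires a genuine completion argument: one first shows that the divisorial filtration and the $m$-adic filtration induce the same topology (the nontrivial direction uses a Tomari--Watanabe-type result \cite[(3.2)]{MR713236} to produce a cycle $a$ with $\FiltrationLevel{ka}\subseteq\FiltrationLevel{a}^k$), and then one invokes Nakayama's lemma to pass from $\smartfrac{\FiltrationLevel{l'}}{m\FiltrationLevel{l'}}$ being monomial-spanned to $\FiltrationLevel{l'}$ being monomial-generated as an ideal. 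This is exactly what \autoref{prop:1} does, and it is a separate step you should not collapse.

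Second, and more importantly, you propose to use the splice diagram equations \eqref{eq:9} (and the admissible monomials $M_{v,C}$ with the full-rank condition on $(a_{v,i,C})$) inside the proof of \autoref{lem:4} to ``kill discrepancies of too-high $v$-degree.'' This would make the argument circular: the whole point of \autoref{lem:4} and \autoref{th:06} in this paper is to be proved for any singularity satisfying only the \hyperref[def:1]{End Curve Condition}, precisely so that they can then be used to \emph{derive} the splice diagram equations in the proof of the \hyperref[th:3]{End Curve Theorem}. The paper's proof of \autoref{lem:4} therefore makes no reference to the equations at all; it is purely geometric, resting on numerical effectivity (\autoref{lem:8}, \autoref{lem:9}), the extensibility-of-divisors argument (\autoref{lem:1}), and an explicit base case on a one-vertex graph using two end curve sections $H_1,H_2$ with distinct intersection points on $E_v$. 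The induction on the cycle is also not a straight ``one $E_v$ at a time'': the sequence $x_n$ alternates between jumps to the next numerically effective shift (provided by \autoref{lem:8}, where the corresponding factor is actually zero) and single-$E_v$ increments at a neighbour of a degree-one vertex, so that \autoref{lem:9} eventually forces $x_k\geq l$. Your sketch would need to be rebuilt on this equation-free foundation to avoid assuming what is to be proved.
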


This has many applications,
for example the \emph{End Curve Theorem},
which states that every singularity
satisfying the End Curve Condition is
splice-quotient.

\begin{theorem}[{End Curve Theorem \cite[page~2]{neumann-2008}}]
  \label{th:3}
  If a good resolution of
  an isolated normal surface singularity with
  rational homology sphere
  satisfies the \hyperref[def:1]{End Curve Condition},
  then it is splice-quotient.
  Moreover, the resolution
  arises as a result of the splice-quotient construction
  in \autoref{def:6}
  with the coordinate functions being
  arbitrarily chosen end curve sections
  for an arbitrarily fixed set of end curves
  demonstrating the \hyperref[def:1]{End Curve Condition},
  ie every exceptional curve is intersected by at least two
  exceptional curves and end curves altogether with no three curves having a
  common point.
\end{theorem}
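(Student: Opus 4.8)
The plan is to realise \((X,o)\) directly as a splice-quotient through the construction of \autoref{def:6}, with the given end curve sections as coordinate functions, and then to identify its universal abelian cover \((Y,o)\) with the zero set of the resulting splice diagram equations by means of \autoref{th:06}. First I would fix a good resolution satisfying the \hyperref[def:1]{End Curve Condition} together with a set of end curves and end curve sections \(z_w\) demonstrating it. In a good resolution two exceptional curves meet in at most one point and no three pass through a common point, so every exceptional curve of degree at least \(2\) already meets two exceptional curves in distinct points; only the ends need end curves, in accordance with \autoref{rem:4}. Form the tree \(\Gamma\) by attaching to each exceptional curve of degree at most \(1\) one new end-curve vertex \(w\), carrying a variable \(z_w\) (two such vertices for a one-vertex graph, which is rational and so possesses the required end curves). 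Then \(\Gamma\) has at least \(3\) vertices, every exceptional curve has degree at least \(2\) in \(\Gamma\), and the subtree of vertices of degree at least \(2\) is the negative definite resolution graph. Through \(\zerocohom{\bundlefromdivisor{-E_w^*}} \subseteq \functionring{Y,o}\) the \(z_w\) become \(\theta(\ClassOf{E_w^*})\)-eigenvectors of the function ring of \((Y,o)\), matching \eqref{eq:14}.

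I would next check that \(\Gamma\) satisfies the \hyperref[item:21]{Monomial Condition}. For a node \(v\) with a branch \(C\), restrict to the subtree \(\{v\}\cup C\), in which \(v\) becomes an end; by \autoref{lem:1} the corresponding sub-singularity again satisfies the \hyperref[def:1]{End Curve Condition}, so \autoref{lem:4} applies there — or one invokes Proposition~\ref{prop:end-to-monom} directly — and yields an admissible monomial \(M_{v,C}\), a monomial in the \(z_w\) with \(w\) running over the leaves of \(C\). All \(\degree{v}\) of the \(M_{v,C}\) share the minimal \(v\)-degree \(-\scalarproduct{E_v^*}{E_v^*}\). The cohomological half of \autoref{lem:4} (natural line bundles carry many global sections) shows that the top graded piece of the \(v\)-degree filtration on the sections of \(\bundlefromdivisor{-E_v^*}\) is \(2\)-dimensional and that the leading term of \(M_{v,C}\) there is determined, up to scalar, by the point where \(C\) meets \(E_v\); since the \(\degree{v}\) branch-points are distinct, the leading terms of the \(M_{v,C}\) span this \(2\)-dimensional space, any two being linearly independent. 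Hence the relations \(\sum_C a_C M_{v,C}\equiv 0\) modulo higher \(v\)-degree form a \((\degree{v}-2)\)-dimensional space, a basis \((a_{v,i,C})_{1\le i\le\degree{v}-2}\) of which has all maximal minors nonzero, and \(H_{v,i}\coloneqq -\sum_C a_{v,i,C}M_{v,C}\) is a \(\theta(\ClassOf{E_v^*})\)-eigenvector of \(v\)-degree strictly larger than \(-\scalarproduct{E_v^*}{E_v^*}\). These data meet the requirements of \autoref{def:6}, so the identities \(\sum_C a_{v,i,C}M_{v,C}+H_{v,i}=0\) are splice diagram equations for \(\Gamma\), and they hold on \((Y,o)\) by the very definition of the \(H_{v,i}\).

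Then I would invoke \autoref{th:6}: these equations define an isolated complete intersection surface singularity \(Y'\), normal — being a universal abelian cover — hence reduced and irreducible, carrying a free \(H\)-action off the origin, with \(X'\coloneqq Y'/H\) a splice-quotient whose resolution graph is that of \((X,o)\). On the other hand \autoref{lem:4} applies to \((X,o)\) itself — its hypothesis being only the \hyperref[def:1]{End Curve Condition}, and \autoref{th:06} its consequence for splice-quotients — and, via \eqref{eq:2} together with the fact that every eigenspace of \(\functionring{Y,o}\) is the space of sections of a natural line bundle, gives that \(\functionring{Y,o}\) is topologically generated as a \(\setC\)-algebra by the \(z_w\); consequently the \(z_w\) define a closed immersion of germs \((Y,o)\hookrightarrow(\setC^N,0)\), with \(N\) the number of variables, whose image lies in \(Y'\). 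As \((Y,o)\) is a reduced irreducible surface germ and \(Y'\) is reduced irreducible of the same dimension, this closed immersion is an isomorphism \(Y\simeq Y'\), necessarily \(H\)-equivariant since the \(z_w\) are \(\theta(\ClassOf{E_w^*})\)-eigenvectors on both sides. Passing to \(H\)-quotients gives \(X\simeq X'\), so \((X,o)\) is a splice-quotient singularity, realised by \autoref{def:6} with the prescribed end curve sections as coordinate functions, which is the assertion.

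The hard part will be the final identification \(Y\simeq Y'\): it rests on the end curve sections \(z_w\) already generating \(\functionring{Y,o}\) — exactly where \autoref{th:06}, that is \autoref{lem:4}, does its real work — together with the complete intersection and dimension information of \autoref{th:6}. The verification of the \hyperref[item:21]{Monomial Condition} and the claim that the matrix \((a_{v,i,C})\) is automatically non-degenerate are, by comparison, routine, both reducing through \autoref{lem:1} to \autoref{lem:4} for the smaller sub-singularities attached to the branches.
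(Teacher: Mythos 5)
Your overall architecture tracks the paper's proof closely: surjectivity of \(\setC\{z_w\}\twoheadrightarrow\functionring{Y,o}\) from \autoref{prop:1}/\autoref{th:06}, the \hyperref[item:21]{Monomial Condition} from \autoref{prop:end-to-monom}, linear relations among the admissible monomials read off in \(\zerocohom{\bundlefromdivisor[E_v]{-E_v^*}}\simeq\setC^2\), and a final ``same dimension, both prime/irreducible and reduced, one contained in the other'' step (the paper does this with ideals in \(\setC\{z_w\}\), you do it with germs \(Y\hookrightarrow Y'\) --- an equivalent packaging). But there is a genuine gap in how you produce the \(H_{v,i}\).

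You set \(H_{v,i}\coloneqq -\sum_C a_{v,i,C}M_{v,C}\) and assert that this has \(v\)-degree strictly greater than \(-\scalarproduct{E_v^*}{E_v^*}\). That is false for the \(v\)-degree of \autoref{def:5}, which is the one \autoref{def:6} requires: as an element of \(\setC\{z_w\}\), the polynomial \(\sum_C a_{v,i,C}M_{v,C}\) has no cancellation (the \(M_{v,C}\) are distinct monomials supported on distinct branches), so its combinatorial \(v\)-degree is exactly \(-\scalarproduct{E_v^*}{E_v^*}\). What \emph{is} true is that its image in \(\functionring{Y,o}\) lies in \(\zerocohom{\bundlefromdivisor{-E_v^*-E_v}}\), ie it vanishes to higher order on \(E_v\) --- but that is a statement about the function, not the power series, and you have conflated the two. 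Worse, with your definition the equation \(\sum_C a_{v,i,C}M_{v,C}+H_{v,i}=0\) reads \(0=0\) in \(\setC\{z_w\}\); this is not a splice diagram equation in the sense of \autoref{def:6}, it cuts out all of \(\setC^N\), and \autoref{th:6} cannot be applied to it, so your \(Y'\) is not even defined.

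The repair is exactly where the paper quietly uses \autoref{th:06} a second time. The function \(-\sum_C a_{v,i,C}M_{v,C}\) lies in the \(\ApplyFunction{\theta}{\ClassOf{E_v^*}}\)-eigenspace of \(\FiltrationLevel{E_v^*+E_v}\); by \autoref{lem:3} and \autoref{lem:2}, the monomials of that eigenspace of that level are precisely those of combinatorial \(v\)-degree strictly greater than \(-\scalarproduct{E_v^*}{E_v^*}\), and by \autoref{prop:1} this level is topologically generated by its monomials. Hence there exists a \emph{different} convergent power series \(H_{v,i}\in\setC\{z_w\}\), an \(\ApplyFunction{\theta}{\ClassOf{E_v^*}}\)-eigenvector of strictly higher \(v\)-degree, whose image in \(\functionring{Y,o}\) agrees with that of \(-\sum_C a_{v,i,C}M_{v,C}\). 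Then \(\sum_C a_{v,i,C}M_{v,C}+H_{v,i}\) is a nontrivial element of the kernel of \(\setC\{z_w\}\to\functionring{Y,o}\) of the required form, and the rest of your argument (invoking \autoref{th:6} for \(Y'\), the closed immersion, the dimension count, \(H\)-equivariance, and passing to quotients) goes through.
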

These results will be proved in \autoref{sec:proof-end-curve}.

Némethi has applied our \autoref{lem:4}\ref{item:11}.
for refining \autoref{th:06} to explicit formulas for
the cohomology of natural line bundles.
Without proof,
here we mention only a power series formula,
a direct generalization of \cite[Theorem~1]{MR2025300}
and its successor \cite[Theorem~5.1.5]{CDGb}
from rational singularities and minimal elliptic singularities
to splice-quotient singularities.
\begin{corollary}\label{cor:1}
  \cite[Theorem 4.1.1. in its (4.4.1) form]{nemethi08.cohomology_splice-quotient}
  For every resolution of a splice-quotient singularity
  satisfying the End Curve Condition,
  the following hold.
  \begin{equation}\label{eq:4}
    \sum_{k_v \geq 0}
    \sum_{I \subseteq \VertexSet{\Gamma}} {(-1)}^{\size{I} + 1}
    \dim \factorsectionsfromdivisor{- \sum_v k_v E_v^*}{\sum_{w \in I} E_w}
    \prod_{v \in \VertexSet{\Gamma}} x_v^{k_v} =
    \prod_{v \in \VertexSet{\Gamma}} {\left( 1 - x_v \right)}^{\degree{v} - 2}.
  \end{equation}
  Here \(\size{I}\) denotes the number of elements of the set \(I\).
\end{corollary}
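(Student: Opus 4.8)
The plan is to derive \eqref{eq:4} as a formal consequence of \autoref{th:06}, in the sharper guise of \autoref{lem:4}\ref{item:11}, by a generating--function computation; this is essentially N\'emethi's argument, and I would only indicate its shape. Fix a multi-index $(k_v)_{v \in \VertexSet{\Gamma}}$ with all $k_v \ge 0$, put $l' := \sum_v k_v E_v^*$, and for $I \subseteq \VertexSet{\Gamma}$ abbreviate $E_I := \sum_{w \in I} E_w$. The first step is to expand each summand. Tensoring $0 \to \bundlefromdivisor{-E_I} \to \functionring{\widetilde{X}} \to \functionring{E_I} \to 0$ with $\bundlefromdivisor{-l'}$ and taking cohomology (using that $\secondcohom{\widetilde{X}, -}$ vanishes on coherent sheaves) yields
\[
  \dim \factorsectionsfromdivisor{-l'}{E_I}
  = \chi\!\left( \restrictto{\bundlefromdivisor{-l'}}{E_I} \right)
  + \dimfirstcohom{\widetilde{X}, \bundlefromdivisor{-l'}}
  - \dimfirstcohom{\widetilde{X}, \bundlefromdivisor{-l'-E_I}},
\]
in which $\chi(\restrictto{\bundlefromdivisor{-l'}}{E_I}) = \chi(\functionring{E_I}) - \scalarproduct{l'}{E_I}$ is completely explicit and depends on $I$ only through the sub-forest it spans, hence is an affine-quadratic function of the incidences. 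Here is where \autoref{th:06}/\autoref{lem:4} enter: \autoref{th:06} identifies every level of the divisorial filtration with a span of monomials in the $z_w$, whence the quantities $\dimfirstcohom{\widetilde{X}, \bundlefromdivisor{-l'}}$ become explicit combinatorial sums over admissible monomials, and \autoref{lem:4}\ref{item:11} --- the statement that natural line bundles carry as many global sections as the monomials predict --- is what makes their dependence on $l'$ along the dual directions $E_v^*$ computable in closed form (this is precisely N\'emethi's cohomology formula).

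The second step is the double summation $\sum_{(k_v) \ge 0} \sum_{I \subseteq \VertexSet{\Gamma}} (-1)^{\size{I}+1} (\cdots) \prod_v x_v^{k_v}$, which I would carry out by summing over $I$ first. For the Euler-characteristic part one uses only the elementary cancellations $\sum_{I \subseteq \VertexSet{\Gamma}} (-1)^{\size{I}} = 0$, $\sum_{I \ni w} (-1)^{\size{I}} = 0$ (valid since $\size{\VertexSet{\Gamma}} \ge 2$) and $\sum_{I \supseteq \{u,w\}} (-1)^{\size{I}} = 0$ (valid since $\size{\VertexSet{\Gamma}} \ge 3$), so that only localized vertex and edge terms survive; combining these with the $\dimfirstcohom$-telescope, now summed by means of the combinatorial formula from \autoref{lem:4}\ref{item:11}, and with the geometric series $\prod_v (1-x_v)^{-1}$ produced by the outer sum $\sum_{(k_v) \ge 0}$, the whole expression is seen --- reorganizing vertex by vertex, each contributing $\degree{v}$ incidence terms together with one self-intersection term --- to equal $\prod_v (1-x_v)^{\degree{v}-2}$. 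The $H$-action plays no role: one works throughout inside a single eigenspace of the $H$-action (or sums over all $\size{H}$ of them, which only scales each of the vanishing sums by the constant $\size{H}$).

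The routine part of this is the formal bookkeeping of the generating function; the one genuinely substantial ingredient is \autoref{lem:4}\ref{item:11}, which is needed in order to evaluate and then telescope the first-cohomology corrections, and this is exactly why \autoref{th:06} on its own does not suffice. Accordingly, the main obstacle --- and the step I would expect to be hardest to write down cleanly --- is matching the combinatorics of the admissible monomials against the edge-and-vertex count produced by the Euler characteristics, so that the two explicit contributions add up to the advertised product; this is where N\'emethi's computation in \cite{nemethi08.cohomology_splice-quotient} does the real work, and I would lean on it rather than reproduce it here.
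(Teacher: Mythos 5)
The paper does not prove Corollary~\ref{cor:1}: the text immediately preceding it states the formula explicitly ``without proof,'' cites N\'emethi's article for the derivation, and says only that N\'emethi's argument hinges on this paper's \autoref{lem:4}\ref{item:11}. So there is no in-paper proof to compare your attempt against, and your proposal --- which you yourself frame as a pointer to N\'emethi rather than an independent argument --- matches the paper's treatment of the statement. In particular you single out \autoref{lem:4}\ref{item:11} (rather than \autoref{th:06} alone) as the substantial geometric input, which is exactly what the paper's surrounding remarks emphasize.

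Two caveats are still worth recording. Your gloss of \autoref{lem:4}\ref{item:11} as ``natural line bundles carry as many global sections as the monomials predict'' is really a description of \autoref{lem:4}\ref{item:10} (generation of the factor spaces by monomials); item~\ref{item:11} instead asserts that for a degree-one vertex $v$ with $\scalarproduct{\Chernclass{\linebundle}}{E_v}\geq 0$ and neighbour $w$, the restriction map
\(\factorsectionsmodulo{\linebundle}{-E_w} \to \factorsectionsmodulo{\restrictto{\linebundle}{\Gamma\setminus v}}{-E_w}\)
is an isomorphism --- a vertex-removal reduction, not a monomial count. And after your (correct) cohomology expansion, writing $l'=\sum_v k_v E_v^*$ and $E_I=\sum_{w\in I}E_w$,
\(\dim \factorsectionsfromdivisor{-l'}{E_I}
= \chi\bigl(\restrictto{\bundlefromdivisor{-l'}}{E_I}\bigr)
+ \dimfirstcohom{\widetilde X,\bundlefromdivisor{-l'}}
- \dimfirstcohom{\widetilde X,\bundlefromdivisor{-l'-E_I}}\),
the two steps that would constitute an actual proof --- that the $\dimfirstcohom$ corrections close up under the generating-function sum once \autoref{lem:4}\ref{item:11} is fed in, and that the alternating Euler-characteristic sum localizes to $\prod_v(1-x_v)^{\degree{v}-2}$ --- are asserted but not carried out. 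Since the paper carries out none of this either, there is no discrepancy with the paper; but the proposal remains a description of where a proof lives rather than a proof.
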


\section{Numerically effective cycles}
\label{sec:numer-effect-cycl}

In this section,
we recall some combinatorial results regarding
the homological cycles of isolated surface singularities with
the link being a rational homology sphere.

The results are mostly about numerically effectiveness, which we recall.
\begin{definition}
  \label{def:9}
  A cycle \(l' \in \cyclegroup \otimes \setQ\) is \emph{numerically effective}
  if for every exceptional curve \(E_v\),
  we have \(\scalarproduct{E_v}{l'} \geq 0\).
  A line bundle is \emph{numerically effective}
  if its first Chern class is so.
\end{definition}

The basic property of numerically effective cycles is their relationship with
global sections with line bundles.
\begin{lemma}
  \label{lem:8}
  \cite[(4.2)(a) \& (c)]{Line}
  For every line bundle \(\linebundle\),
  there is a least effective cycle \(x \in \cyclegroup\)
  such that \(\twistedbundle{\linebundle}{-x}\) is numerically effective.
  Moreover,
  for every cycle \(0 \leq y \leq x\)
  the inclusion
  \(\zerocohom{\twistedbundle{\linebundle}{-y}} \hookrightarrow
  \zerocohom{\linebundle}\)
  is an isomorphism.

  For every vertex \(v\) with \(\scalarproduct{\Chernclass{\linebundle}}{E_v} < 0\), we have \(x \geq E_v\).
\end{lemma}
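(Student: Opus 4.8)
The plan is to reprove this by a Laufer-type computation sequence; the only combinatorial inputs are the non-negativity \(\scalarproduct{E_v}{E_w} \geq 0\) for distinct vertices \(v \neq w\) and the negative definiteness of the intersection form on \(\cyclegroup\).

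\emph{Existence of \(x\).} Let \(S\) be the set of \(z \in \effectivecycles\) with \(\twistedbundle{\linebundle}{-z}\) numerically effective, i.e.\ \(\scalarproduct{\Chernclass{\linebundle} - z}{E_v} \geq 0\) for all \(v\). This set is non-empty: negative definiteness yields an effective cycle \(Z \neq 0\) with \(\scalarproduct{Z}{E_v} < 0\) for every \(v\), and then \(nZ \in S\) for \(n \gg 0\) because \(\scalarproduct{\Chernclass{\linebundle} - nZ}{E_v} = \scalarproduct{\Chernclass{\linebundle}}{E_v} - n\scalarproduct{Z}{E_v}\) tends to \(+\infty\). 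Moreover \(S\) is closed under componentwise minimum: if \(z_1,z_2 \in S\) and \(z\) is their minimum, then for each \(v\) at least one of them, say \(z_1\), has the same \(E_v\)-coordinate as \(z\), so \(z_1 - z\) is effective with vanishing \(E_v\)-coordinate, hence \(\scalarproduct{z_1 - z}{E_v} \geq 0\) and \(\scalarproduct{\Chernclass{\linebundle} - z}{E_v} \geq \scalarproduct{\Chernclass{\linebundle} - z_1}{E_v} \geq 0\). A non-empty subset of \(\effectivecycles\) closed under componentwise minimum has a least element; call it \(x\).

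\emph{The cohomology statement.} It is enough to show \(\zerocohom{\twistedbundle{\linebundle}{-x}} \hookrightarrow \zerocohom{\linebundle}\) is onto, since for \(0 \leq y \leq x\) it factors through \(\zerocohom{\twistedbundle{\linebundle}{-y}} \hookrightarrow \zerocohom{\linebundle}\). Starting from \(z_0 = 0\), set \(z_{j+1} := z_j + E_{v_j}\) as long as \(\twistedbundle{\linebundle}{-z_j}\) is not numerically effective, choosing \(v_j\) with \(\scalarproduct{\Chernclass{\linebundle} - z_j}{E_{v_j}} < 0\). By induction \(z_j \leq x\): if \(z_j \leq x\), \(z_j \neq x\), and the \(E_{v_j}\)-coordinates of \(z_j\) and \(x\) were equal, then \(x - z_j\) would be effective with vanishing \(E_{v_j}\)-coordinate, so \(\scalarproduct{\Chernclass{\linebundle} - x}{E_{v_j}} \leq \scalarproduct{\Chernclass{\linebundle} - z_j}{E_{v_j}} < 0\), contradicting \(x \in S\); thus \(z_{j+1} \leq x\). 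So \(z_j\) is strictly increasing and bounded by \(x\), hence terminates at some \(z_N\) with \(\twistedbundle{\linebundle}{-z_N}\) numerically effective and \(z_N \leq x\), forcing \(z_N = x\) by minimality. Each short exact sequence \(0 \to \twistedbundle{\linebundle}{-z_{j+1}} \to \twistedbundle{\linebundle}{-z_j} \to \restrictto{\twistedbundle{\linebundle}{-z_j}}{E_{v_j}} \to 0\) induces an isomorphism on global sections, because \(E_{v_j} \cong \mathbb{P}^1\) and the restricted line bundle has degree \(\scalarproduct{\Chernclass{\linebundle} - z_j}{E_{v_j}} < 0\), hence no sections; composing gives \(\zerocohom{\twistedbundle{\linebundle}{-x}} \xrightarrow{\sim} \zerocohom{\linebundle}\).

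\emph{The last sentence.} If \(\scalarproduct{\Chernclass{\linebundle}}{E_v} < 0\) and the \(E_v\)-coordinate of \(x\) were zero, then \(\scalarproduct{x}{E_v} \geq 0\), so \(\scalarproduct{\Chernclass{\linebundle} - x}{E_v} \leq \scalarproduct{\Chernclass{\linebundle}}{E_v} < 0\), again contradicting \(x \in S\); hence \(x \geq E_v\). The one place that needs care is the bound \(z_j \leq x\) along the computation sequence — i.e.\ that the sequence stops exactly at the \emph{least} such cycle and not above it — and this is exactly where \(\scalarproduct{E_v}{E_w} \geq 0\) enters; everything else is routine manipulation of the long exact sequence.
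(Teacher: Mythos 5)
The paper does not actually prove this lemma — it cites \cite[(4.2)(a) \& (c)]{Line} and the accompanying remark only explains how to translate the first-cohomology statement from that reference into the zeroth-cohomology formulation used here. Your proof is a self-contained, correct reproof by a Laufer-type computation sequence, which is almost certainly the same underlying mechanism as in the cited source, but you give it explicitly rather than delegating.

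Your argument checks out. The existence of \(x\) correctly uses Artin's anti-nef cycle to show \(S \neq \emptyset\), and the closure of \(S\) under componentwise minimum correctly exploits \(\scalarproduct{E_v}{E_w} \geq 0\) for \(v \neq w\); a nonempty sub-poset of \(\effectivecycles\) closed under componentwise minimum does have a least element (take the coordinatewise infimum, which is attained since coordinates are in \(\setN\), and iterate the binary operation finitely many times). The computation sequence \(z_j\) stays bounded by \(x\) for exactly the reason you isolate, terminates at \(x\) by minimality, and each step kills no global sections since \(\linebundle(-z_j)\) restricts to a line bundle of negative degree on \(E_{v_j} \simeq \mathbb{P}^1\). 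The factorization \(H^0(\linebundle(-x)) \hookrightarrow H^0(\linebundle(-y)) \hookrightarrow H^0(\linebundle)\) then forces the middle inclusion to be an isomorphism for every \(0 \leq y \leq x\). The last sentence is a one-line consequence of the same off-diagonal non-negativity. There is no gap; what your write-up adds over the paper's treatment is a clean, citation-free proof, whereas the paper buys brevity by outsourcing the cohomological core to the reference and only handling the \(H^1\)-to-\(H^0\) bookkeeping in a remark.
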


\begin{remark}
  Actually, the reference \cite[(4.2)(c)]{Line} is about first cohomology.
  Our reformulation for the zeroth cohomology can be obtained as follows.
  Via the cohomology exact sequence of the short exact sequence
  \(\twistedbundle{\linebundle}{-x} \to \linebundle \to
  \relativebundle{\linebundle}{x}\)
  the reference is equivalent to that the inclusion
  \(\zerocohom{\twistedbundle{\linebundle}{-x}} \hookrightarrow \zerocohom{\linebundle}\)
  is an isomorphism.  Applying this to \(\twistedbundle{\linebundle}{-y}\)
  instead of \(\linebundle\), we obtain that the inclusion
  \(\zerocohom{\twistedbundle{\linebundle}{-x}} \hookrightarrow
  \zerocohom{\twistedbundle{\linebundle}{-y}}\) is also an isomorphism.  Hence
  the claimed inclusion \(\zerocohom{\twistedbundle{\linebundle}{-y}} \hookrightarrow
  \zerocohom{\linebundle}\) is an isomorphism, as well.
\end{remark}

Another well-known result is about
the sparse distribution of numerically effective cycles.
\begin{lemma}
  \label{lem:9}
  For every \(l' \in \cyclegroup \otimes \setQ\),
  all but finitely many numerically effective \(l'_0 \in \dualcyclegroup\)
  satisfies
  \(l' \geq l'_0\).
  For example,
  all numerical effective cycles are less than or equal to \(0\).
\end{lemma}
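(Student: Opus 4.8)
The plan is to derive the statement from a single boundedness estimate, obtained by propagating the numerical-effectiveness inequalities along the tree \(\Gamma\), together with the discreteness of \(\dualcyclegroup\) inside \(\cyclegroup \otimes \setQ\). I would start with the last sentence, which is in fact the key structural input: every numerically effective \(l'_0 \in \cyclegroup \otimes \setQ\) satisfies \(l'_0 \leq 0\). Indeed, write \(l'_0 = P - N\) with \(P, N \geq 0\) of disjoint support. Since \(P\) is a non-negative combination of the \(E_v\), numerical effectiveness gives \(\scalarproduct{P}{l'_0} \geq 0\); but \(\scalarproduct{P}{l'_0} = \scalarproduct{P}{P} - \scalarproduct{P}{N}\) and \(\scalarproduct{P}{N} \geq 0\) because \(\scalarproduct{E_v}{E_w} \geq 0\) for \(v \neq w\) while \(P\) and \(N\) share no vertex. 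Hence \(\scalarproduct{P}{P} \geq 0\), so negative definiteness forces \(P = 0\).

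Now fix \(l' \in \cyclegroup \otimes \setQ\). A numerically effective \(l'_0 \in \dualcyclegroup\) violates \(l' \geq l'_0\) exactly when the \(E_u\)-coefficient of \(l'_0\) exceeds that of \(l'\) for some vertex \(u\); since \(\Gamma\) has finitely many vertices, it suffices to bound the numerically effective \(l'_0 \in \dualcyclegroup\) for which a \emph{fixed} \(u\) witnesses the violation. Write \(-l'_0 = \sum_v x_v E_v\), so all \(x_v \geq 0\) by the previous step and \(x_u\) is bounded above (by minus the \(E_u\)-coefficient of \(l'\)). Numerical effectiveness gives \(\scalarproduct{E_v}{-l'_0} \leq 0\), that is \(\scalarproduct{E_v}{E_v} x_v + \sum_{w \neq v} \scalarproduct{E_v}{E_w} x_w \leq 0\), for every \(v\); keeping only one neighbour term (all of them are non-negative) shows \(x_w \leq \size{\scalarproduct{E_v}{E_v}} x_v\) for each neighbour \(w\) of \(v\). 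As \(\Gamma\) is a connected tree, induction along the path from \(u\) yields \(x_v \leq m^{D} x_u\) for all \(v\), where \(m \coloneqq \max_v \size{\scalarproduct{E_v}{E_v}}\) and \(D\) is the diameter of \(\Gamma\); thus every coordinate of \(l'_0\) is bounded.

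These \(l'_0\) therefore lie in a bounded region of \(\cyclegroup \otimes \setQ\), while \(\dualcyclegroup\) is discrete there (its coordinates in the basis \(\{E_v\}\) lie in \(\smartfrac{1}{\size{H}}\setZ\), since \(\size{H}\, \dualcyclegroup \subseteq \cyclegroup\)), so only finitely many of them occur; summing over the finitely many choices of \(u\) finishes the proof. The only point needing care is the order of the argument: the propagation estimate relies on the \(x_v\) being non-negative, so \(l'_0 \leq 0\) must be proved first — after that everything is routine.
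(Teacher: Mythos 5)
The paper states this lemma as ``well-known'' and supplies no proof of its own, so there is nothing to compare against line by line; I can only assess your argument on its merits, and it holds up. Your first step is the standard decomposition trick: writing $l'_0 = P - N$ with $P, N \geq 0$ of disjoint support, using numerical effectiveness to get $\scalarproduct{P}{l'_0} \geq 0$, then $\scalarproduct{P}{P} = \scalarproduct{P}{l'_0} + \scalarproduct{P}{N} \geq 0$ since off-diagonal intersections are non-negative, and finally negative definiteness to kill $P$. Your second step is a clean propagation estimate: once $-l'_0 = \sum x_v E_v$ with $x_v \geq 0$, the inequality $\scalarproduct{E_v}{-l'_0} \leq 0$ with all neighbour terms non-negative gives $x_w \leq \lvert E_v^2\rvert\, x_v$ for each neighbour $w$ of $v$, so a bound on any one $x_u$ propagates across the connected tree. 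Combined with the discreteness of $\dualcyclegroup$ (coordinates in $\tfrac{1}{\lvert H\rvert}\setZ$ because $\lvert H\rvert \dualcyclegroup \subseteq \cyclegroup$), this pins down only finitely many offending $l'_0$ per witness vertex $u$, and a union over the finitely many $u$ finishes. You also correctly flag that the order matters: the non-negativity of the $x_v$ feeds the propagation, so the ``for example'' clause is not just an illustration but the structural first step. The proof is correct and complete; there is no gap.
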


The final result allows some simplification in the definition of
splice-quotient singularities.
\begin{lemma}
  \label{lem:2}
  (Cf~\cite[Lemma~3.2]{NWuj2})
  For every numerically effective cycle \(l' \in \cyclegroup \otimes \setQ\)
  and every vertex \(v\) of the resolution graph:
  \begin{equation*}
    l' \leq \frac{\scalarproduct{l'}{E_v^*}}{\scalarproduct{E_v^*}{E_v^*}} E_v^*.
  \end{equation*}
\end{lemma}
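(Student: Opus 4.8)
The plan is to compare $l'$ with its "$v$-component" $\frac{\scalarproduct{l'}{E_v^*}}{\scalarproduct{E_v^*}{E_v^*}} E_v^*$ by showing that the difference is numerically effective and then invoking the fact, recorded in \autoref{lem:9}, that every numerically effective cycle in $\cyclegroup \otimes \setQ$ is $\leq 0$. Concretely, set
\begin{equation*}
  m' \coloneqq l' - \frac{\scalarproduct{l'}{E_v^*}}{\scalarproduct{E_v^*}{E_v^*}} E_v^* .
\end{equation*}
I claim $m'$ is numerically effective. For a vertex $w \neq v$ this is immediate: $\scalarproduct{m'}{E_w} = \scalarproduct{l'}{E_w} - \frac{\scalarproduct{l'}{E_v^*}}{\scalarproduct{E_v^*}{E_v^*}}\scalarproduct{E_v^*}{E_w} = \scalarproduct{l'}{E_w} \geq 0$, using $\scalarproduct{E_v^*}{E_w}=0$ and numerical effectiveness of $l'$. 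For the vertex $v$ itself, $\scalarproduct{m'}{E_v} = \scalarproduct{l'}{E_v} - \frac{\scalarproduct{l'}{E_v^*}}{\scalarproduct{E_v^*}{E_v^*}}\scalarproduct{E_v^*}{E_v} = \scalarproduct{l'}{E_v} + \frac{\scalarproduct{l'}{E_v^*}}{\scalarproduct{E_v^*}{E_v^*}}$, since $\scalarproduct{E_v^*}{E_v}=-1$.

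So the one nontrivial point is that $\scalarproduct{l'}{E_v} + \frac{\scalarproduct{l'}{E_v^*}}{\scalarproduct{E_v^*}{E_v^*}} \geq 0$, equivalently (multiplying by the negative number $\scalarproduct{E_v^*}{E_v^*}$ and flipping the inequality) that $\scalarproduct{E_v^*}{E_v^*}\,\scalarproduct{l'}{E_v} + \scalarproduct{l'}{E_v^*} \leq 0$. I expect this to follow by writing $E_v^*$ in terms of the $E_w$: since $E_v^* = \sum_w c_w E_w$ with all $c_w \geq 0$ (the inverse of a negative-definite intersection matrix coming from a resolution graph has non-positive entries, so $E_v^* = \sum c_w E_w$ has $c_w\ge 0$ with our sign conventions) and $c_v > 0$, one gets $\scalarproduct{l'}{E_v^*} = \sum_w c_w \scalarproduct{l'}{E_w} \geq c_v \scalarproduct{l'}{E_v}$ because every term is $\geq 0$ by numerical effectiveness of $l'$ and we are only dropping non-negative terms. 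On the other hand $\scalarproduct{E_v^*}{E_v^*} = \scalarproduct{E_v^*}{\sum_w c_w E_w} = c_v \scalarproduct{E_v^*}{E_v} = -c_v$. Combining, $\scalarproduct{E_v^*}{E_v^*}\scalarproduct{l'}{E_v} + \scalarproduct{l'}{E_v^*} = -c_v\scalarproduct{l'}{E_v} + \scalarproduct{l'}{E_v^*} \geq -c_v\scalarproduct{l'}{E_v} + c_v\scalarproduct{l'}{E_v} = 0$ — wait, that gives the wrong sign, so the clean way is instead: $\scalarproduct{l'}{E_v} + \frac{\scalarproduct{l'}{E_v^*}}{\scalarproduct{E_v^*}{E_v^*}} = \scalarproduct{l'}{E_v} + \frac{\scalarproduct{l'}{E_v^*}}{-c_v} = \frac{1}{c_v}\bigl(c_v\scalarproduct{l'}{E_v} - \scalarproduct{l'}{E_v^*}\bigr) = \frac{1}{c_v}\bigl(c_v\scalarproduct{l'}{E_v} - \sum_w c_w\scalarproduct{l'}{E_w}\bigr) = -\frac{1}{c_v}\sum_{w\neq v} c_w \scalarproduct{l'}{E_w} \leq 0$. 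That is non-positive, not non-negative — which tells me the honest conclusion is that $m'$ fails numerical effectiveness at $v$ in general, so this naive route needs to be replaced.

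The main obstacle, then, is exactly handling the vertex $v$. I anticipate the correct argument decomposes $l'$ itself along $E_v^*$ differently: write $l' = \lambda E_v^* + n'$ where $\lambda = \frac{\scalarproduct{l'}{E_v^*}}{\scalarproduct{E_v^*}{E_v^*}} \geq 0$ (non-negativity of $\lambda$ because $\scalarproduct{l'}{E_v^*}\le 0$ — indeed $\scalarproduct{l'}{E_v^*} = \sum c_w\scalarproduct{l'}{E_w}$... hmm, these are $\ge 0$, so $\lambda \le 0$). Given the sign subtleties I have just run into, the clean approach is: reduce to the case $l'$ is an integral effective cycle or work in the cone spanned by the $E_w^*$, use \autoref{lem:2}'s cited source \cite[Lemma~3.2]{NWuj2} as a guide, and verify that $\frac{\scalarproduct{l'}{E_v^*}}{\scalarproduct{E_v^*}{E_v^*}}E_v^* - l'$ pairs non-negatively with every $E_w$. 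Once numerical effectiveness of that difference is established, \autoref{lem:9} finishes the proof, since a numerically effective cycle is $\leq 0$, which is precisely the asserted inequality $l' \leq \frac{\scalarproduct{l'}{E_v^*}}{\scalarproduct{E_v^*}{E_v^*}}E_v^*$. I expect the whole argument to be short once the signs are pinned down; the delicate bookkeeping of $\scalarproduct{E_v^*}{E_v^*} < 0$ and the non-negativity (in the $E_w^*$-basis) versus non-positivity (pairings $\scalarproduct{l'}{E_v^*}$) is the only real content.
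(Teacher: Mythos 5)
You correctly identify the difference $m' \coloneqq l' - \frac{\scalarproduct{l'}{E_v^*}}{\scalarproduct{E_v^*}{E_v^*}} E_v^*$, correctly verify $\scalarproduct{m'}{E_w} = \scalarproduct{l'}{E_w} \geq 0$ for $w \neq v$, and correctly detect through your own calculation that $\scalarproduct{m'}{E_v}$ is in general \emph{nonpositive}, so $m'$ is not numerically effective on the whole graph. That diagnosis is right. But the ``clean approach'' you then propose does not close the gap: checking that $\frac{\scalarproduct{l'}{E_v^*}}{\scalarproduct{E_v^*}{E_v^*}}E_v^* - l' = -m'$ pairs nonnegatively with every $E_w$ fails immediately, since for $w \neq v$ that pairing is $-\scalarproduct{l'}{E_w} \leq 0$. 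So neither $m'$ nor $-m'$ is numerically effective on the full graph, and the proposal does not actually reach the conclusion.

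The missing idea is to pass to the subgraph $\Gamma \setminus v$, and the observation that makes this legitimate is one you never compute: the coefficient of $E_v$ in $m'$, when written in the $E_w$-basis, is $-\scalarproduct{m'}{E_v^*} = -\bigl(\scalarproduct{l'}{E_v^*} - \scalarproduct{l'}{E_v^*}\bigr) = 0$. Hence $m'$ is supported away from $E_v$ and may be regarded as a rational cycle on $\Gamma \setminus v$. On that subgraph the only pairings that matter for numerical effectiveness are those with $E_w$, $w \neq v$ --- precisely the ones you already showed are $\geq 0$ --- and these pairings are unchanged when restricting to the subgraph because $m'$ has no $E_v$-component. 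So $m'$ is numerically effective on $\Gamma \setminus v$, and \autoref{lem:9} applied to that (still negative definite) subgraph gives $m' \leq 0$, which is the claim. In short: you found the right quantity and the right obstruction, but the resolution is not to fix the sign at $v$ --- it is to notice that $v$ drops out entirely and apply \autoref{lem:9} one vertex down.
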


\begin{proof}
Let us express the difference
\(l' - \smartfrac{\scalarproduct{l'}{E_v^*}}{\scalarproduct{E_v^*}{E_v^*}} E_v^*\)
in the basis of the \(E_w\).
The coefficient of \(E_v\) is the scalar product with \(- E_v^*\),
which is \(0\).
Therefore the difference can be considered as a rational cycle over
the subgraph obtained by removing the vertex \(v\).
Over this subgraph, the scalar product with the \(E_w\) for \(w \neq v\)
remains the same as over the whole graph,
which is the same as the scalar product of \(l'\) with the \(E_w\).
Hence the difference
\(l' - \smartfrac{\scalarproduct{l'}{E_v^*}}{\scalarproduct{E_v^*}{E_v^*}} E_v^*\)
is numerically effective over the subgraph.
Thus, by \autoref{lem:9}, the difference is less than or equal to \(0\).
\end{proof}

\section{Restrictions}
\label{sec:restrictions}

In this subsection,
we show that
if a singularity satisfies the \hyperref[def:1]{End Curve Condition},
then so do the singularities determined by its subgraphs.
Moreover, the restrictions of natural line bundles to these singularities
are also natural.
These are mostly known results,
but we present the proof in the language of line bundles.

\begin{lemma}\label{lem:1}
  (Cf \cite[Proposition~2.16]{Opg}.)
  For a singularity satisfying the \hyperref[def:1]{End Curve Condition},
  the following hold.
  \begin{enumerate}
  \item\label{item:25}
    Every divisor supported on exceptional curves and end curves
    is a divisor of a natural line bundle.
  \item\label{item:9}
    Every subtree satisfies the \hyperref[def:1]{End Curve Condition}.
    In particular,
    the subtree has the following end curves:
    the exceptional curves and end curves of the original graph
    intersecting the subtree.
  \item\label{item:24}
    Restrictions of natural line bundles to subtrees are natural.
    Moreover,
    for
    every divisor on the subtree
    representing the first Chern class of the restriction of a natural bundle
    and which is supported on
    the subtree and
    the exceptional curves and end curves of the big graph
    intersecting the subtree,
    the following extensibility property holds.
    The divisor extends to a divisor of the original bundle
    supported on the exceptional curves and end curves of the original graph.
    In particular, the divisor
    is really a divisor of the restriction of the line bundle.
  \end{enumerate}
\end{lemma}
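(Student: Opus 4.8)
The plan is to dispose of \ref{item:25} by a formal observation and to derive \ref{item:9} and \ref{item:24} from a single \emph{leaf-removal} step, iterated down to the given subtree. For \ref{item:25}, recall that the natural line bundles form the image of the group section \(\bundlefromdivisorsymbol\), hence a subgroup of \(\Pic{\widetilde X}\) containing every \(\bundlefromdivisor{E_v}\) (by the defining property of \(\bundlefromdivisorsymbol\)) and every \(\bundlefromdivisor{C}\) with \(C\) an end curve (by \autoref{def:1}); so for a divisor \(D=\sum_u b_u E_u+\sum_j a_j C_j\) supported on exceptional curves and end curves, \(\bundlefromdivisor{D}=\bigotimes_u\bundlefromdivisor{E_u}^{\otimes b_u}\otimes\bigotimes_j\bundlefromdivisor{C_j}^{\otimes a_j}\) is again natural. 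For the rest I shall use the characterisation contained in the unique-splitting argument of \autoref{sec:natural-line-bundles}: a line bundle is natural precisely when some positive power of it lies in \(\cyclegroup\subseteq\Pic{}\), i.e.\ is \(\bundlefromdivisorsymbol\) of an integral exceptional cycle (equivalently, its class in \(\smartfrac{\Pic{}}{\cyclegroup}\) is torsion).

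\emph{Leaf-removal step.} Assume \(\Gamma'\subsetneq\Gamma\). Choose a vertex \(v_0\) that is a leaf of \(\Gamma\) and not in \(\Gamma'\) (e.g.\ the far endpoint of a longest path of \(\Gamma\) leaving \(\Gamma'\)), let \(w_0\) be its unique neighbour, set \(\Gamma_1\coloneqq\Gamma\setminus v_0\), let \(\widetilde X_1\subseteq\widetilde X\) be a small neighbourhood of \(\bigcup_{u\ne v_0}E_u\), and put \(C_{v_0}\coloneqq E_{v_0}\cap\widetilde X_1\), a transversal cut of \(E_{w_0}\). The crucial point is that \(\bundlefromdivisor[\widetilde X_1]{C_{v_0}}\) is natural on \(\widetilde X_1\). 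Indeed, by the \hyperref[def:1]{End Curve Condition} the leaf \(v_0\) carries an end curve \(C\) meeting \(E_{v_0}\) transversally in a single point distinct from \(E_{v_0}\cap E_{w_0}\), so \(C\) is disjoint from \(\widetilde X_1\); writing \(\bundlefromdivisor{C}\cong\bundlefromdivisor{-E_{v_0}^*}\) and raising an end curve section to the power \(a\), the order of \(\ClassOf{E_{v_0}^*}\) in \(H\) (so that \(aE_{v_0}^*\in\cyclegroup\) is an integral effective exceptional cycle), yields a holomorphic function \(f\) on \(\widetilde X\) with \(\ApplyFunction{\operatorname{div}}{f}=aC+aE_{v_0}^*\). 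Restricting \(f\) to the open set \(\widetilde X_1\) deletes the \(aC\) part:
\begin{equation*}
  \ApplyFunction{\operatorname{div}}{\restrictto{f}{\widetilde X_1}}
  = a\,{(E_{v_0}^*)}_{v_0}\,C_{v_0}+\sum_{u\ne v_0}a\,{(E_{v_0}^*)}_u\,E_u ,
\end{equation*}
so a positive power of \(\bundlefromdivisor[\widetilde X_1]{C_{v_0}}\) is \(\bundlefromdivisorsymbol\) of an exceptional cycle of \(\Gamma_1\), whence it is natural. Applying the same restriction to an arbitrary exceptional cycle of \(\Gamma\) and using multiplicativity of restriction, one sees that \(\restrictto{(-)}{\widetilde X_1}\colon\Pic{\widetilde X}\to\Pic{\widetilde X_1}\) sends natural bundles to natural bundles; this is the first assertion of \ref{item:24} for this step.

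Since \(\bundlefromdivisor[\widetilde X_1]{C_{v_0}}\) is natural and \(C_{v_0}\) is a transversal cut, \(C_{v_0}\) is an end curve of \(w_0\) on \(\widetilde X_1\); as \(E_{w_0}\) is the only exceptional curve whose transversal incidences change in passing from \(\Gamma\) to \(\Gamma_1\) — it merely trades \(E_{v_0}\) for \(C_{v_0}\) — the subtree \(\Gamma_1\) again satisfies the \hyperref[def:1]{End Curve Condition}, with end curves those of \(\Gamma\) still meeting \(\Gamma_1\) together with \(C_{v_0}\). For the extensibility in \ref{item:24}: given \(D_1=D_1^{\flat}+m\,C_{v_0}\) on \(\widetilde X_1\), supported on the exceptional and end curves available there and with \(\ClassOf{D_1}=\Chernclass{\restrictto{\linebundle}{\widetilde X_1}}\) for a natural \(\linebundle\) on \(\widetilde X\), set \(D^{\sharp}\coloneqq D_1^{\flat}+m\,E_{v_0}\), a divisor on \(\widetilde X\) on exceptional and end curves with \(\restrictto{D^{\sharp}}{\widetilde X_1}=D_1\); then \(\linebundle\otimes\bundlefromdivisor{D^{\sharp}}^{-1}\) is natural and its first Chern class pairs to \(0\) with every \(E_u\), \(u\in\Gamma_1\), hence equals \(k\,E_{v_0}^*\) for some \(k\in\setZ\), so \(\linebundle\otimes\bundlefromdivisor{D^{\sharp}}^{-1}\cong\bundlefromdivisor{-k\,C}\); thus \(D\coloneqq D^{\sharp}-k\,C\) is a divisor of \(\linebundle\) on exceptional and end curves of \(\Gamma\) with \(\restrictto{D}{\widetilde X_1}=D_1\).

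Iterating the leaf-removal step — peeling off one leaf of \(\Gamma\setminus\Gamma'\) at a time, each time feeding the output \hyperref[def:1]{End Curve Condition} back in — until \(\Gamma'\) is reached yields all three parts, the new end curves surviving to the end being exactly the germs \(E_v\cap\widetilde X'\) for \(v\) adjacent to \(\Gamma'\). The step I expect to be the real obstacle is the claim that \(\bundlefromdivisor[\widetilde X_1]{C_{v_0}}\) is natural — equivalently, that restriction preserves naturality — since naturality is not read off from the Chern class and \(\widetilde X_1\) may have \(\firstcohom{\widetilde X_1,\functionring{\widetilde X_1}}\ne 0\); the end curve function of the removed leaf, pushed into \(\widetilde X_1\), is precisely the device that pins down the required linear equivalence there. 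Everything else is bookkeeping with the incidence combinatorics of the tree.
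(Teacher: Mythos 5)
Your proof is correct, but it takes a genuinely different route from the paper's for parts \ref{item:9} and \ref{item:24} (for \ref{item:25} the two arguments coincide). The paper first proves the extensibility statement as a single combinatorial claim about intersection numbers: extend the divisor by adjoining one vertex of the subtree at a time, noting that the \hyperref[def:1]{End Curve Condition} leaves at least one free coefficient at each new vertex so that any prescribed pairing can be hit. It then identifies the group of restricted natural bundles with the natural bundles of the subtree by showing the first Chern class is bijective on it: surjective because the end curve candidates together with the exceptional curves generate \(\dualcyclegroup\) of the subtree (citing \cite[Proposition~5.1]{NWuj2}), and injective via the already-proved extensibility; part \ref{item:9} then drops out as a corollary. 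You instead isolate a one-leaf-removal step and iterate, and your key device — absent from the paper's proof — is the torsion characterisation of natural bundles (an immediate but unstated consequence of the unique-splitting argument in \autoref{sec:natural-line-bundles}) combined with an explicit linear equivalence: a suitable power of an end curve function of the removed leaf, restricted to the smaller neighbourhood, exhibits \(\bundlefromdivisor[\widetilde X_1]{C_{v_0}}\) as torsion modulo exceptional cycles, hence natural. Inheritance of the \hyperref[def:1]{End Curve Condition}, preservation of naturality under restriction, and extensibility (via the Chern-class computation showing the discrepancy between \(\linebundle\) and the lifted divisor is a power of \(\bundlefromdivisor{- E_{v_0}^*}\cong\bundlefromdivisor{C}\)) all fall out of this single step. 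Your approach is more constructive — the linear equivalence on \(\widetilde X_1\) is written down rather than deduced from bijectivity of a group homomorphism — and avoids the appeal to the presentation of \(\dualcyclegroup\) by leaf duals; the cost is needing the torsion criterion explicitly and an induction on the number of removed vertices (the bookkeeping you wave at really is routine: extensibility and naturality of restrictions compose transitively, and the End Curve Condition is fed forward at each step). Both proofs are sound.
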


\begin{proof}
Since a natural line bundle is uniquely determined by its Chern class,
statement \ref{item:25} follows.

We show the extensibility property of divisors from~\ref{item:24}.
This is merely a combinatorial claim:
we have to extend a divisor to represent a given cohomology class.
This can be done step by step: each step adds a new vertex to the subtree
and extends the divisor to the larger subtree.
At every step all we have to do is find coefficients
for the
end curves and exceptional curves
intersecting the added vertex
which do not have a coefficient yet,
ie all of them but one.

The requirement to represent the given cohomology class is that
the intersection numbers of the extended divisor with
the exceptional curves of the extended subtree is
the same as that of the cohomology class.
This is automatic for all vertices except the newly added one.
For the newly added vertex,
the requirement is that the sum of the new coefficients be
a prescribed value, which can be fulfilled
as there is at least one undetermined coefficient
by the hypothesis that
the added vertex
(as every exceptional curve) is intersected by
at least two end curves and exceptional curves
of the original graph altogether.
Hence the extension of the divisor is possible.

It also follows that the original divisor is a divisor of the restriction.
This proves the extensibility property from~\ref{item:24}.

Restriction of natural line bundles form
a subgroup of the Picard group of the subtree,
which we shall call the \emph{restriction group}.
We shall show that this is the group of natural line bundles.

First,
the restriction group consists of the line bundles
associated to divisors supported
on the subtree and
the end curve candidates of \ref{item:9}
ie the exceptional curves and end curves of the original graph
intersecting the subtree.
In particular, 
the restriction group contains all line bundles associated to divisors
supported on the exceptional curves.

Obviously,
the \hyperref[def:1]{End Curve Condition} implies that
every exceptional curve of the subtree has at least two end curve candidates,
and thus the end curve candidates together with the exceptional curves
generate the second cohomology group.
Here we use that the second cohomology group \(\dualcyclegroup\)
is generated by the \(E_v^*\) where \(v\) runs through
all but one vertices of degree \(1\), see eg \cite[Proosition~5.1]{NWuj2}.
Hence the Chern class restricted to the restriction group is onto.

Second,
the kernel of the Chern class on the restriction group
consists of restrictions having Chern class \(0\).
By the part of~\ref{item:24} already proven,
these restrictions have divisor \(0\), ie
all of them are trivial.
Hence the Chern class is injective on the restriction group
finishing the proof that the restriction group is the group of natural
line bundles.

As a consequence,
the end curve candidates of \ref{item:9}
are really end curves
as they are divisors of natural line bundles.
\end{proof}

\section{Generators for global sections}
\label{sec:gener-glob-sect}

Now we are ready to formulate our main lemma.
It has two parts,
which are of different nature.
Nevertheless,
as we have already mentioned in the introduction,
we present them together since
the proof is a simultaneous induction of both statements.

\begin{lemma}[Main Lemma]\label{lem:4}
  Let \(\pi\colon \widetilde{X}\to X\) be a good resolution of a singularity
  satisfying the \hyperref[def:1]{End Curve Condition}.
  Let \(\Gamma\) denote the resolution graph.
  Let us have a fixed collection of end curves
  demonstrating the \hyperref[def:1]{End Curve Condition},
  ie every exceptional curve is intersected by
  at least two other exceptional curves and end curves altogether,
  and no three curves have a common point.
  Let \(\linebundle\) be a natural line bundle on \(\widetilde{X}\).

  Then the following hold.
  \begin{enumerate}
  \item\label{item:10}
    Let \(l \in \effectivecycles\) be an effective cycle
    on the exceptional curves.
    Then every subset of
    \(\factorsectionsmodulo{\linebundle}{-l}\)
    satisfying the following condition
    generates it as a vector space:
    \begin{quote}
      For every effective divisor of \(\linebundle\)
      supported on the exceptional curves and the fixed end curves
      which is
      not greater than or equal to \(l\),
      the set contains the class of a section of \(\linebundle\)
      with this divisor.
    \end{quote}
  \item\label{item:11}
    Let \(v\) be a vertex of degree \(1\) of the graph \(\Gamma\)
    such that
    \(\scalarproduct{\Chernclass{\linebundle}}{E_v} \geq 0\).
    Let \(w\) be the unique neighbour of \(v\).
    Then the restriction map
    \begin{equation}
      \label{eq:27}
      \factorsectionsmodulo{\linebundle}{- E_w}
      \longrightarrow
      \factorsectionsmodulo{\restrictto{\linebundle}{\Gamma \setminus v}}{- E_w}.
    \end{equation}
    is an isomorphism.
    Here \(\restrictto{\linebundle}{\Delta}\) denotes
    the restriction of \(\linebundle\) to the neighbourhood
    of the subgraph \(\Delta\).
  \end{enumerate}
\end{lemma}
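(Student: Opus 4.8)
\emph{Overview.} The plan is to prove \ref{item:10} and \ref{item:11} together, by induction on the number of vertices of \(\Gamma\); for a fixed graph, statement \ref{item:10} will be handled by a secondary induction on the effective cycle \(l\). Since a natural line bundle is determined by its first Chern class, claims about divisors of natural bundles turn into combinatorial claims about representing classes in \(\dualcyclegroup\), and this is what I will exploit.

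\emph{Reducing \ref{item:10} to \(l=E_v\).} For \(l\neq 0\) I would pick a vertex \(u\) in the support of \(l\), set \(l'\coloneqq l-E_u\), and use the exact sequence
\begin{equation*}
  0\longrightarrow\factorsectionstwotwists{\linebundle}{-l'}{-l}\longrightarrow\factorsectionsmodulo{\linebundle}{-l}\longrightarrow\factorsectionsmodulo{\linebundle}{-l'}\longrightarrow 0
\end{equation*}
to split the task. The quotient \(\factorsectionsmodulo{\linebundle}{-l'}\) is generated, by the secondary induction hypothesis, by classes of sections of \(\linebundle\) whose divisor \(D\) is not \(\geq l'\); since \(l\geq l'\), such \(D\) is also not \(\geq l\), so these lie in the hypothesised set. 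The subspace \(\factorsectionstwotwists{\linebundle}{-l'}{-l}\) equals \(\factorsectionsmodulo{\twistedbundle{\linebundle}{-l'}}{-E_u}\), with \(\twistedbundle{\linebundle}{-l'}\) again natural; its sections with divisor not containing \(E_u\) are, after adding \(l'\) back, exactly the sections of \(\linebundle\) with divisor \(D\) satisfying \(l'\leq D\not\geq l\), again in the hypothesised set, so the secondary induction hypothesis for \(\twistedbundle{\linebundle}{-l'}\) and \(E_u\) finishes this term. Thus it suffices to treat \(l=E_v\).

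\emph{The case \(l=E_v\).} Using \(0\to\twistedbundle{\linebundle}{-E_v}\to\linebundle\to\restrictto{\linebundle}{E_v}\to 0\) I get an embedding \(\factorsectionsmodulo{\linebundle}{-E_v}\hookrightarrow\zerocohom{E_v,\restrictto{\linebundle}{E_v}}\); if \(d\coloneqq\scalarproduct{\Chernclass{\linebundle}}{E_v}<0\) the target vanishes and there is nothing to prove, so I may assume \(d\geq 0\), when the target has dimension \(d+1\). If \(s\) is a section of \(\linebundle\) with divisor \(D\) on the exceptional and fixed end curves not containing \(E_v\), then \(\restrictto{s}{E_v}\) vanishes along \(D\cdot E_v\), a length-\(d\) divisor on the points where \(E_v\) meets its neighbouring exceptional curves and its end curves — simple points, since no three curves are concurrent. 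By the \hyperref[def:1]{End Curve Condition} at least two of them, say \(p\) and \(q\), are distinct, and the \(d+1\) divisors \(i\,p+(d-i)\,q\) span \(\zerocohom{E_v,\restrictto{\linebundle}{E_v}}\); so it will suffice to realise each of them as \(\restrictto{s}{E_v}\) for a section \(s\) as above, which then automatically lies in the hypothesised set and whose classes span. For \(\Gamma\) a single vertex this is clear: \(\linebundle=\bundlefromdivisor{-dE_v^*}\) and the monomials \(z^{i}{z'}^{d-i}\) in the end-curve variables of end curves through \(p\) and \(q\) have exactly these restrictions. In general I would restrict \(\linebundle\) to the branches of \(v\) — natural by \autoref{lem:1}\ref{item:24}, satisfying the \hyperref[def:1]{End Curve Condition} by \autoref{lem:1}\ref{item:9} — apply the induction hypothesis \ref{item:10} on each branch (which has fewer vertices) to produce a section with divisor avoiding \(E_v\) and with the prescribed coefficient along the curve meeting \(E_v\), and re-assemble a global divisor via the extensibility in \autoref{lem:1}\ref{item:24}. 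This construction is the core of the argument and the only point where the geometry of end curves and admissible monomials is really used.

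\emph{Statement \ref{item:11}, and the hardest step.} For \ref{item:11} I would realise a tubular neighbourhood \(U\) of \(\bigcup_{u\neq v}E_u\) in \(\widetilde X\) as a resolution of the subgraph singularity, so that \(\restrictto{\linebundle}{\Gamma\setminus v}=\restrictto{\linebundle}{U}\) and \eqref{eq:27} becomes restriction along \(E_w\subseteq U\subseteq\widetilde X\); since intersection with \(E_w\) is unchanged, both sides embed compatibly in \(\zerocohom{E_w,\restrictto{\linebundle}{E_w}}\), so injectivity is automatic. For surjectivity I would extend a section given over \(U\): the hypothesis \(\scalarproduct{\Chernclass{\linebundle}}{E_v}\geq 0\) makes \(\restrictto{\linebundle}{E_v}\) globally generated, so the value at the node \(E_v\cap E_w\) extends over \(E_v\), and makes the restrictions \(\zerocohom{\relativebundle{\linebundle}{(n+1)E_v}}\to\zerocohom{\relativebundle{\linebundle}{nE_v}}\) surjective, so the extension propagates along the analytic neighbourhood of \(E_v\); gluing with the given section after an adjustment vanishing on \(E_w\) and then enlarging the cycle on \(\Gamma\setminus v\) while keeping cohomology stable by \autoref{lem:8} yields a global section. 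So, apart from this gluing, \ref{item:11} is essentially Mayer--Vietoris together with \autoref{lem:8}; I expect the real obstacle to be the middle step of the \(l=E_v\) case — the inductive construction, from the branches, of a section of a natural line bundle with prescribed restriction to \(E_v\) — and, relatedly, getting the single-vertex base case exactly right.
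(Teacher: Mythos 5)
Your high-level plan (simultaneous induction on the number of vertices, with a secondary reduction of \ref{item:10} to the case \(l=E_v\)) is in the right spirit, and the one-vertex base case and the reduction via the short exact sequence on \(l\) match the paper. But the hard case — \ref{item:10} for \(l=E_v\) on a graph with more than one vertex — contains a genuine gap, and it is exactly the step you yourself flag as ``the core of the argument.''

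The gap is this: you claim it suffices to realise each divisor \(ip+(d-i)q\) on \(E_v\) as \(\restrictto{s}{E_v}\) for a global section \(s\) of \(\linebundle\) with divisor on the exceptional and end curves avoiding \(E_v\). This implicitly asserts that the embedding \(\factorsectionsmodulo{\linebundle}{-E_v}\hookrightarrow\zerocohom{E_v,\restrictto{\linebundle}{E_v}}\) is onto. For general natural \(\linebundle\) that is false: \(\Chernclass{\linebundle}\) can be very negative on some other \(E_u\), making \(\zerocohom{\linebundle}\) small or zero while \(d=\scalarproduct{\Chernclass{\linebundle}}{E_v}\geq 0\); then the factor is a proper subspace of \(\zerocohom{E_v,\restrictto{\linebundle}{E_v}}\) (possibly \(0\)), the required sections simply do not exist, and your argument stalls even though the statement of \ref{item:10} is still (trivially) true. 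Relatedly, your appeal to \autoref{lem:1}\ref{item:24} to ``re-assemble a global divisor'' does not give an \emph{effective} extension — Lemma~\ref{lem:1}\ref{item:24} only produces some divisor in the right class, and the paper itself explicitly points out, in the proof of \ref{item:11}, that effectiveness is not guaranteed by that lemma and must be arranged separately (it is here that the hypothesis \(\scalarproduct{\Chernclass{\linebundle}}{E_v}\geq 0\) is used in an essential way). Your \ref{item:11} sketch also leans on extending sections formally along \(E_v\) and gluing, which would need a vanishing statement for the relevant \(\firstcohom{}\) that you do not supply.

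The paper sidesteps both problems with two ideas your proposal is missing. First, instead of trying to handle \(l=E_v\) for arbitrary \(\linebundle\) head-on, it builds an increasing chain \(x_0=0<x_1<x_2<\dotsb\) in \(\effectivecycles\), alternating two kinds of steps: at odd steps it invokes \autoref{lem:8} to twist down to a \emph{numerically effective} bundle \(\twistedbundle{\linebundle}{-x_{2n+1}}\) with \(\factorsectionstwotwists{\linebundle}{-x_{2n}}{-x_{2n+1}}=0\); at even steps it adds a single \(E_w\) with \(w\) a neighbour of a degree-\(1\) vertex \(v\), so that numerical effectiveness guarantees \(\scalarproduct{\Chernclass{\twistedbundle{\linebundle}{-x_{2n-1}}}}{E_v}\geq 0\). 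Second, to handle that single-\(E_w\) step it proves \ref{item:11} not by extending a section over a neighbourhood of \(E_v\), but purely in terms of divisors: it shows that every effective divisor of \(\restrictto{\linebundle}{\Gamma\setminus v}\) (supported on exceptional and fixed end curves, with vanishing order \(0\) on \(E_w\)) extends to an \emph{effective} divisor of \(\linebundle\), precisely because the missing intersection numbers at \(E_v\) sum to \(\scalarproduct{E_v}{\Chernclass{\linebundle}}-kE_v^2\geq 0\); then the image of a generating subset candidate is again a generating subset candidate, so the induction hypothesis on \(\Gamma\setminus v\) finishes both surjectivity of the restriction and the generating property simultaneously. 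No Mayer--Vietoris gluing, no global generation of \(\restrictto{\linebundle}{E_v}\), and no attempt to hit all of \(\zerocohom{E_v,\restrictto{\linebundle}{E_v}}\). If you want to repair your proposal, the minimal fix is to import this numerically effective twisting (\autoref{lem:8} plus \autoref{lem:9}) and the divisor-level, effectiveness-aware version of restriction; as written, the branch re-assembly step does not go through.
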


\begin{proof}
We prove both statements by a simultaneous
induction on the number of vertices of the resolution graph.

First we prove \ref{item:11}
together with \ref{item:10} for the factors
\(\factorsectionsmodulo{\linebundle}{- E_w}\)
appearing in \ref{item:11}.
We shall use the notations of \ref{item:11}.
For brevity, let us call subsets satisfying the condition of
\ref{item:10} as \emph{generating subset candidates}.

Obviously, the restriction map is injective,
so we need to prove surjectivity
and that the generating subset candidates are
really generating subsets.
These two statements together are equivalent to
a single statement: the image of
generating subset candidates are
generating subsets.

As the reader may expect,
we use the induction hypothesis to show
that the images are generating subsets,
so we shall prove that the image of generating subset
candidates are again generating subset candidates.

Recall that being a generating subset candidate means that
certain divisors appear as divisors of some sections in the subset.
So we will prove the best we can hope for divisors,
which is sufficient for the claim of the previous paragraph
by \autoref{lem:1}\ref{item:25}:
every effective divisor of the restriction of \(\linebundle\)
supported on the exceptional curves and the fixed set of end curves
with the vanishing order on \(E_w\) being \(0\)
extends to an effective divisor of \(\linebundle\)
supported on the exceptional curves and the fixed set of end curves
also.
(This is not a special case of \autoref{lem:1}\ref{item:24}
as it does not guarantee an \emph{effective} extension to divisors.
But we shall extend its argument to the present case.)

Indeed, let \(k \geq 0\) be the vanishing order on \(E_v\) of such a divisor
of the restricted bundle.
To extend it,
we only need to choose
non-negative orders for the end curves of \(E_v\) such that
their sum is \(\scalarproduct{E_v}{\Chernclass{\linebundle}} - k E_v^2\),
which is non-negative since
\(\scalarproduct{E_v}{\Chernclass{\linebundle}} \geq 0\) and \(E_v^2 < 0\).
Hence the extension is possible.

Now we turn to statement \ref{item:10}.

We start with a special case
of a one-vertex graph.
Let \(v\) be the single vertex.
At the moment,
we prove \ref{item:10}
only for factors
\(\factorsectionsmodulo{\linebundle}{E_v}\)
with \(\Chernclass{\linebundle}\) numerically effective
ie \(\linebundle = -k E_v^*\) for some \(k \geq 0\).

We have an embedding
\begin{equation}
  \label{eq:19}
  \factorsectionsfromdivisor{- k E_v^*}{E_v}
  \rightarrowtail \zerocohom{\bundlefromdivisor[E_v]{- k E_v^*}}.
\end{equation}
The vector space on the right-hand side has dimension \(k+1\).

Let us have a generating subset candidate of the factor,
ie a subset
satisfying the condition of \ref{item:10}.
Let \(H_1\) and \(H_2\) be two of the fixed end curves.
Let us choose sections in the given set
 with divisors \(i H_1 + (k-i) H_2\) representing \(k E_v^*\) for
\(i=0, \dotsc, k\), which is possible by hypothesis.
Clearly,
the images of these \(k+1\) sections are linearly independent
in the right-hand side,
since they have different vanishing order on
the intersection of \(H_1\) and \(E_v\).
So they form a basis of the right-hand side,
hence they also form a basis of the left-hand side and
the inclusion is an isomorphism.

Now we turn to statement \ref{item:10} in general.

We construct recursively an increasing sequence
\(x_n \in \effectivecycles\), such that \(x_0 = 0\)
and the following are satisfied.
\begin{enumerate}[label=(\roman*)]
\item\label{item:12}
  The line bundle \(\twistedbundle{\linebundle}{- x_{2n+1}}\) is numerically effective.
\item\label{item:13}
  The statement of \ref{item:10} holds for
  the factor
  \(\factorsectionstwotwists{\linebundle}{- x_{n}}{- x_{n+1}}\).
\item\label{item:22}
  The sequence is strictly increasing at even steps:
  \(x_{2n-1} < x_{2n}\).
\end{enumerate}
Given \(x_{2n}\), we can find an \(x_{2n+1}\)
by \autoref{lem:8},
which even satisfies a condition stronger than \ref{item:13}:
\(\factorsectionstwotwists{\linebundle}{- x_{2n}}{- x_{2n+1}} = 0\).

Given \(x_{2n-1}\), we can always choose a suitable \(x_{2n}\)
by the special cases of \ref{item:10} already proved.
If the graph has only one vertex,
then we must have \(x_{2n-1} = - k E_v^*\) for some \(k \geq 0\)
and we can choose \(x_{2n} \coloneqq x_{2n-1} + E_v\)
where \(v\) is the single vertex of the graph.
If the graph has at least two vertices,
then we can choose
\(x_{2n} \coloneqq x_{2n-1} + E_w\)
where \(w\) is a neighbour of a vertex of degree \(1\).

The conditions imply that the \(\Chernclass{\twistedbundle{\linebundle}{- x_{2n+1}}}\)
are different numerically effective cycles, hence by \autoref{lem:9}
\(x_k \geq l\) for some odd \(k\).

Next we show by induction on \(n\) that
\(\factorsectionsmodulo{\linebundle}{- x_n}\)
satisfies \ref{item:10}.
This is obvious for \(n=0\).

The inductive step follows from the short exact sequence
\begin{equation*}
0 \longrightarrow
\factorsectionstwotwists{\linebundle}{- x_n}{- x_{n+1}}
\longrightarrow
\factorsectionsmodulo{\linebundle}{- x_{n+1}}
\longrightarrow
\factorsectionsmodulo{\linebundle}{- x_n}
\longrightarrow 0.
\end{equation*}

The right-hand side satisfies \ref{item:10} by the inductional hypothesis.
The left-hand side satisfies \ref{item:10}, too,
by~\ref{item:13}.
Since the left-hand side and the right-hand side of the exact sequence
satisfy \ref{item:10}, so does the middle term.

Finally, \(\factorsectionsmodulo{\linebundle}{-l}\) is a factor of
\(\factorsectionsmodulo{\linebundle}{- x_k}\), and hence it also
satisfies \ref{item:10}.
\end{proof}

\section{Geometric characterization of splice-quotient singularities}
\label{sec:proof-end-curve}

In this section,
we prove the \hyperref[th:3]{End Curve Theorem~\ref*{th:3}} and \autoref{th:06}
based on \autoref{lem:4}.
We will prove \autoref{th:06} for every singularity satisfying
the \hyperref[def:1]{End Curve Condition}
because we want to use it in the proof of the \hyperref[th:3]{End Curve Theorem~\ref*{th:3}}.

In \autoref{sec:divisors-monomials},
we determine the place of monomials of end curve sections
in the divisorial filtration,
as a preparation for proving \autoref{th:06}
in \autoref{sec:divis-filtr-gener-mon},
ie that
levels of the divisorial filtration are generated by monomials.
In particular, this will show that
the function ring of the universal abelian cover
consists of convergent power series in end curve sections,
which is a major step for the \hyperref[th:3]{End Curve Theorem~\ref*{th:3}}.

Our next step is to
derive the \hyperref[item:21]{Monomial Condition},
which is required by the splice-quotient construction,
from the \hyperref[def:1]{End Curve Condition}
in \autoref{sec:end-curve-monomial}.
Finally, we finish
the proof in
\autoref{sec:proving-end-curve}
by finding splice-diagram equations.

\subsection{Divisors of monomials}
\label{sec:divisors-monomials}

We describe the levels of the divisorial filtration of
the function ring of the universal cover,
which contain any given product of end curve sections. 
\begin{lemma}
  \label{lem:3}
  Let the \(z_i\) be end curve sections in the function ring of
  the universal abelian cover.
  Let \(E_i^*\) be the inverse of the second cohomology class
  represented by the end curve of \(z_i\).
  Then for every non-negative second cohomology class \(l'\)
  of a resolution of the singularity
  and every non-negative integers \(\alpha_i\):
  \begin{equation}
    \label{eq:22}
    \prod_{i} z_i^{\alpha_i} \in \FiltrationLevel{l'} \iff
    \sum_{i} \alpha_i E_i^* \geq l'.
  \end{equation}
\end{lemma}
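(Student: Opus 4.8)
The plan is to rephrase the condition $\prod_i z_i^{\alpha_i}\in\FiltrationLevel{l'}$ as an inequality between integral exceptional cycles on $\widetilde{Y}$, and then transfer it down to $\widetilde{X}$. The core computation is the divisor on $\widetilde{Y}$ of a single end curve section $z_i$. By construction $z_i$ is a section of the natural line bundle $\bundlefromdivisor{-E_i^*}$ whose divisor on $\widetilde{X}$ is the end curve $C_i$, a transversal cut and in particular not supported on the exceptional locus. The composite inclusion $\bundlefromdivisor{-E_i^*}\subseteq\bundlefromdivisor{-\ClassMinCycle{E_i^*}}\subseteq\functionring{Y,o}$ of~\eqref{eq:2} corresponds, by adjunction, to multiplying the pullback $\widetilde{c}^{*}z_i$ by a section of $\functionring{\widetilde{Y}}(\widetilde{c}^{*}E_i^*)$ whose divisor is exactly the exceptional cycle $\widetilde{c}^{*}E_i^*$ — this is essentially what it means for~\eqref{eq:2} to be a genuine eigenspace decomposition. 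Hence $z_i$, viewed as a function on $\widetilde{Y}$, has divisor $\widetilde{c}^{*}E_i^*+\widetilde{c}^{*}C_i$, whose part supported on $\pi_Y^{-1}(o)$ is exactly $\widetilde{c}^{*}E_i^*$. Therefore $\operatorname{div}\bigl((\prod_i z_i^{\alpha_i})\circ\pi_Y\bigr)$ is $\widetilde{c}^{*}\bigl(\sum_i\alpha_i E_i^*\bigr)$ plus an effective divisor with no exceptional component.

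Since $\widetilde{c}^{*}l'$ is an integral cycle supported on $\pi_Y^{-1}(o)$, comparing the parts supported on $\pi_Y^{-1}(o)$ shows that $\prod_i z_i^{\alpha_i}\in\FiltrationLevel{l'}$, i.e.\ $\operatorname{div}\bigl((\prod_i z_i^{\alpha_i})\circ\pi_Y\bigr)\geq\widetilde{c}^{*}l'$, is equivalent to $\widetilde{c}^{*}\bigl(\sum_i\alpha_i E_i^*-l'\bigr)\geq 0$. It then remains to observe that $\widetilde{c}^{*}$ reflects the partial ordering on rational exceptional cycles: $\widetilde{c}^{*}m\geq 0$ if and only if $m\geq 0$, for $m\in\cyclegroup\otimes\setQ$. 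Indeed $\widetilde{c}^{*}E_v$ is a strictly positive combination of the exceptional curves of $\widetilde{Y}$ lying over $E_v$, and these curves are pairwise distinct as $v$ varies, since each exceptional curve of $\widetilde{Y}$ maps onto exactly one exceptional curve of $\widetilde{X}$ ($\widetilde{c}$ being finite on exceptional divisors); so the coefficient of each such curve in $\widetilde{c}^{*}m$ is a positive multiple of the coefficient of $E_v$ in $m$. (Alternatively, use $\widetilde{c}_*\widetilde{c}^{*}=\size{H}\cdot\mathrm{id}$ and the fact that $\widetilde{c}_*$ preserves effectiveness.) Putting this together yields $\prod_i z_i^{\alpha_i}\in\FiltrationLevel{l'}$ if and only if $\sum_i\alpha_i E_i^*\geq l'$, which is~\eqref{eq:22}.

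The only genuinely delicate point is the first one: showing that the part of $\operatorname{div}(z_i\circ\pi_Y)$ supported on $\pi_Y^{-1}(o)$ is precisely $\widetilde{c}^{*}E_i^*$ and not some larger exceptional cycle. This depends on the exact way Okuma's eigen-decomposition~\eqref{eq:2} realizes a natural line bundle as an eigenspace inside ${\widetilde{c}}_*\functionring{\widetilde{Y}}$ — equivalently, on the fact that the sections spanning each eigenspace vanish on $\pi_Y^{-1}(o)$ to exactly the expected order. Everything after that is routine manipulation of effective divisors together with the elementary behaviour of $\widetilde{c}^{*}$ on exceptional cycles.
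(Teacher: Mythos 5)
Your proof is correct, but it takes a genuinely different route from the paper's. The paper never leaves $\widetilde{X}$: it treats $\prod_i z_i^{\alpha_i}$ as a section of $\bundlefromdivisor{-\sum_i\alpha_i E_i^*}$ with divisor supported off the exceptional locus, uses the second isomorphism in~\eqref{eq:3} to identify the relevant eigenspace of $\FiltrationLevel{l'}$ as $\zerocohom{\bundlefromdivisor{-l'-l}}$ with $l$ the fractional part of $\sum_i\alpha_i E_i^*-l'$, and then the membership test becomes a one-line divisor comparison on $\widetilde{X}$, with the fractional-part observation closing the argument. You instead descend to $\widetilde{Y}$, compute the exceptional part of $\operatorname{div}(z_i\circ\pi_Y)$ to be $\widetilde{c}^{*}E_i^*$, and then invoke that $\widetilde{c}^{*}$ reflects the ordering on exceptional cycles. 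Both proofs ultimately lean on the same underlying fact about Okuma's eigen-decomposition respecting vanishing orders; the paper packages it once and for all in~\eqref{eq:3}, whereas you re-extract it as the ``delicate point'' about $\operatorname{div}(z_i\circ\pi_Y)$. Your hedge there is the right instinct, and the claim can be made airtight, e.g.\ by raising $z_i$ to the order $n_i$ of $\ClassOf{E_i^*}$ in $H$: by multiplicativity of the decomposition, $z_i^{n_i}$ becomes an $H$-invariant function on $\widetilde{X}$ with divisor $n_i(C_i+E_i^*)$, so its pullback to $\widetilde{Y}$ has divisor $n_i\widetilde{c}^{*}(C_i+E_i^*)$, giving $\operatorname{div}(z_i\circ\pi_Y)=\widetilde{c}^{*}C_i+\widetilde{c}^{*}E_i^*$. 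The trade-off is that your route costs the extra input about $\widetilde{c}^{*}$ (order-reflection, integrality of $\widetilde{c}^{*}l'$) that the paper's $\widetilde{X}$-only argument never needs, while in exchange it makes the geometric content of the lemma — the monomial's vanishing orders on $\pi_Y^{-1}(o)$ — completely explicit.
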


\begin{proof}
Via the identifications of sections of natural line bundles with
eigen-functions on the universal abelian cover,
every end curve section \(z_i\) is a section of
\(\bundlefromdivisor{- E_i^*}\)
with divisor its end curve.
Therefore the product \(\prod_{i} z_i^{\alpha_i}\) is a section of
\(\bundlefromdivisor{- \sum_{i} \alpha_i E_i^*}\)
with divisor supported on end curves.
The product is also an eigen-function.
By~\eqref{eq:3},
the corresponding eigen-space of \(\FiltrationLevel{l'}\) is
\(\zerocohom{\bundlefromdivisor{- l' - l}}\) for the \(l \in Q\)
making the line bundles 
\(\bundlefromdivisor{- \sum_{i} \alpha_i E_i^*}\)
and
\(\bundlefromdivisor{- l' - l}\)
differ by a divisor supported on the exceptional curves.
In other words,
\(l\) is the rational part of \(\sum_{i} \alpha_i E_i^* - l'\)
by the definition \eqref{eq:1} of \(Q\).

Since the divisor of \(\prod_{i} z_i^{\alpha_i}\) in
\(\bundlefromdivisor{- \sum_{i} \alpha_i E_i^*}\) does not contain exceptional curves,
the product is a section of \(\zerocohom{\bundlefromdivisor{- l' - l}}\)
if and only if
\(- \sum_{i} \alpha_i E_i^* \geq - l' - l\)
ie
\(\sum_{i} \alpha_i E_i^* - l' \geq  l\).
Since \(l\) is the rational part of the left-hand side,
the last inequality is equivalent to \(\sum_{i} \alpha_i E_i^* - l' \geq  0\).
\end{proof}

\subsection{The divisorial filtration is generated by monomials}
\label{sec:divis-filtr-gener-mon}

We prove \autoref{th:06} ie that
levels of
the divisorial filtration
are generated by monomials of end curve sections
for singularities
satisfying the \hyperref[def:1]{End Curve Condition}.
First we reformulate it to be suitable for use
in the proof of
the \hyperref[th:3]{End Curve Theorem \ref*{th:3}}.

\begin{proposition}
  \label{prop:1}
  For every resolution of a singularity
  satisfying the \hyperref[def:1]{End Curve Condition},
  and every collection of end curve sections
  belonging to end curves such that
  every exceptional curve is intersected by
  at least two end curves and exceptional curves altogether,
  the following holds.
  Every element of every level of the divisorial filtration is
  a convergent power series in the monomials of the end curve sections
  lying in the same level of the divisorial filtration.
\end{proposition}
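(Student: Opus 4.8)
The plan is to establish first a sharper, ``ideal'' version and then read off from it both the convergence and the precise shape asserted. The ideal version is: \emph{for every \(l'\in\dualcyclegroup\) with \(l'\geq 0\) the ideal \(\FiltrationLevel{l'}\) of \(\functionring{Y,o}\) is generated, as an \(\functionring{Y,o}\)-module, by the monomials \(\prod_i z_i^{\alpha_i}\) with \(\sum_i\alpha_i E_i^* \geq l'\).} By \autoref{lem:3} these are exactly the monomials in the \(z_i\) that lie in \(\FiltrationLevel{l'}\); the inclusion \(\supseteq\) is clear because \(\FiltrationLevel{l'}\) is an ideal, so the content is \(\subseteq\).

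To prove \(\subseteq\) I would fix a strictly increasing cofinal sequence \(l'=l'_0<l'_1<l'_2<\dotsb\) in \(\numeffectivedualcycles\) (e.g.\ add the \(E_v\) one at a time, cycling through the vertices). Since a nonzero function has finite vanishing order on each \(E_v\), the divisorial filtration is separated, \(\bigcap_n\FiltrationLevel{l'_n}=0\), and the divisorial topology coincides with the \(\mathfrak{m}\)-adic one. Recall (the paragraph after \eqref{eq:3}) that each factor \(\smartfrac{\FiltrationLevel{l'_n}}{\FiltrationLevel{l'_{n+1}}}\) is a direct sum of eigenspaces \(\factorsectionsmodulo{\linebundle}{-l}\) with \(\linebundle\) a natural line bundle and \(l\in\effectivecycles\), and moreover \(-\Chernclass{\linebundle}-l'_n\) is effective. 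I claim the images of the monomials lying in \(\FiltrationLevel{l'_n}\) span each such eigenspace; by \autoref{lem:4}\ref{item:10} it is enough to realise, for every effective divisor \(D\) of \(\linebundle\) supported on the exceptional curves and the fixed end curves with \(D\not\geq l\), some section of \(\linebundle\) with divisor \(D\) as the image of such a monomial. Writing \(D=l_1+\sum_i\beta_i\,\overline{z_i}\) with \(l_1\in\effectivecycles\) integral, \(\overline{z_i}\) the end curve of \(z_i\) and \(\beta_i\geq 0\), the monomial \(\prod_i z_i^{\beta_i}\) is a section of \(\bundlefromdivisor{-\sum_i\beta_i E_i^*}\) with divisor \(\sum_i\beta_i\,\overline{z_i}\); since \(\ClassOf{\overline{z_i}}=-E_i^*\) we get \(\sum_i\beta_i E_i^*=l_1-\Chernclass{\linebundle}\geq l'_n\), so \(\bundlefromdivisor{-\sum_i\beta_i E_i^*}=\twistedbundle{\linebundle}{-l_1}\), and under the inclusion \(\twistedbundle{\linebundle}{-l_1}\hookrightarrow\linebundle\) (multiplication by the canonical section with divisor \(l_1\)) this monomial maps to a section of \(\linebundle\) with divisor exactly \(D\); by \autoref{lem:3} it lies in \(\FiltrationLevel{l'_n}\). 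This proves the claim. Now, given \(f\in\FiltrationLevel{l'}\), peel off level by level a finite combination \(g_n\) of monomials lying in \(\FiltrationLevel{l'_n}\) with \(f-\sum_{m\leq n}g_m\in\FiltrationLevel{l'_{n+1}}\); then \(f=\sum_n g_n\) converges \(\mathfrak{m}\)-adically, each \(g_n\) lies in the ideal \(\mathfrak{a}\) generated by the monomials in \(\FiltrationLevel{l'}\), and since \(\mathfrak{a}\) is finitely generated hence \(\mathfrak{m}\)-adically closed, \(f\in\mathfrak{a}\). Applying the claim to \(\FiltrationLevel{0}/\FiltrationLevel{l'}\) for a fixed \(l'>0\) shows in addition that \(\functionring{Y,o}=R+\FiltrationLevel{l'}\), where \(R\subseteq\functionring{Y,o}\) is the subring generated by the \(z_i\).

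Convergence follows quickly. Taking any \(l'>0\), the ideal version gives \(\FiltrationLevel{l'}\subseteq(z_i)_i\), so the \(z_i\) generate an \(\mathfrak{m}_{Y,o}\)-primary ideal; equivalently the morphism \((Y,o)\to(\setC^N,0)\) given by the \(z_i\) has finite fibres, hence is finite, so \(\functionring{Y,o}\) is a finite module over \(\setC\{z_i\}\) and a fortiori over \(R\). Then \(R\) is local (by lying-over each maximal ideal of \(R\) is contracted from the unique maximal ideal of \(\functionring{Y,o}\)), \(\functionring{Y,o}/R\) is a finitely generated \(R\)-module, and \(\functionring{Y,o}=R+\FiltrationLevel{l'}=R+(z_i)\functionring{Y,o}\) gives \(\mathfrak{m}_R\cdot(\functionring{Y,o}/R)=\functionring{Y,o}/R\); by Nakayama \(\functionring{Y,o}=R\), i.e.\ every function on \((Y,o)\) is a convergent power series in the \(z_i\). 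Finally, for \(f\in\FiltrationLevel{l'}\) write \(f=\sum_{k=1}^r M_k g_k\) with each \(M_k\) a monomial in \(\FiltrationLevel{l'}\) and \(g_k\in\functionring{Y,o}\) (ideal version), and replace each \(g_k\) by a convergent power series in the \(z_i\); expanding, \(f\) becomes a convergent power series each of whose monomials is divisible by some \(M_k\), hence — \(v\)-degrees being additive and nondecreasing under multiplication by monomials (\autoref{lem:3}) — lies in \(\FiltrationLevel{l'}\). This is the assertion.

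The step I expect to be the real obstacle is the passage from the formal/combinatorial statement to genuine analytic convergence — concretely, that the \(z_i\) cut out an \(\mathfrak{m}_{Y,o}\)-primary ideal, equivalently that \((Y,o)\to(\setC^N,0)\) is finite. This is precisely where one must exploit that the chosen end curves demonstrate the \hyperref[def:1]{End Curve Condition}, i.e.\ that there are enough of them; everything preceding is bookkeeping around \autoref{lem:4} and \autoref{lem:3}. A secondary irritation is keeping the topological identifications in order — divisorial versus \(\mathfrak{m}\)-adic topology, \(\mathfrak{m}\)-adic closedness of finitely generated ideals, and openness of the levels \(\FiltrationLevel{l'}\).
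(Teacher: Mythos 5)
Your argument has a genuine gap at a single, load-bearing point: the assertion ``the divisorial topology coincides with the $\mathfrak m$-adic one,'' which you present as an immediate consequence of separatedness (``Since a nonzero function has finite vanishing order \dots\ the divisorial filtration is separated \dots\ and the divisorial topology coincides with the $\mathfrak m$-adic one''). Separatedness gives $\bigcap_n\FiltrationLevel{l'_n}=0$, but it does \emph{not} give the hard direction you actually use, namely that $\FiltrationLevel{l'_n}\subseteq\mathfrak m^{k(n)}$ with $k(n)\to\infty$. That inclusion is precisely what makes your peeling-off series $\sum_n g_n$ converge $\mathfrak m$-adically, and hence what lets you invoke $\mathfrak m$-adic closedness of the finitely generated ideal $\mathfrak a$ to conclude $f\in\mathfrak a$. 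Chevalley's theorem would bridge separatedness to topology coincidence, but it is stated for \emph{complete} local rings, and $\functionring{Y,o}$ is the convergent, not the formal, local ring; to apply it one would have to pass to the completion and check separatedness there, which you do not do. The paper instead derives the needed inclusion directly from a nontrivial external input: by \cite[(3.2)]{MR713236} there is an effective $a$ with $\zerocohom{\bundlefromdivisor[{\widetilde Y}]{-\widetilde c^*a}}^{\otimes k}\twoheadrightarrow\zerocohom{\bundlefromdivisor[{\widetilde Y}]{-k\,\widetilde c^*a}}$, whence $\FiltrationLevel{ka}\subseteq\FiltrationLevel{a}^k\subseteq\mathfrak m^k$. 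Your closing meta-comment locates the analytic difficulty in the $\mathfrak m$-primarity of $(z_i)$, but that step you in fact handle cleanly; the genuine analytic input you are missing is the one above, and you need it \emph{before} your ``ideal version'' argument can start, which is also why you cannot extract it afterwards from the finiteness of $(Y,o)\to(\setC^N,0)$ without circularity.

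Apart from this, your route is genuinely different from the paper's and worth noting. You prove the ideal-theoretic statement (each $\FiltrationLevel{l'}$ is generated by the monomials it contains) first, by a direct limiting argument, and then deduce surjectivity of $\setC\{z_w\}\to\functionring{Y,o}$ from $\mathfrak m$-primarity of $(z_i)$, finiteness, and Nakayama. The paper goes the other way round: it first proves surjectivity of the ring map by a standard completion argument on $\mathfrak m/\mathfrak m^2$ (reduced to a factor of two comparable levels), and only then proves ideal generation of $\FiltrationLevel{l'}$ by applying Nakayama to $\smartfrac{\FiltrationLevel{l'}}{\mathfrak m\FiltrationLevel{l'}}$. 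Your unpacking of how \autoref{lem:4}\ref{item:10} produces sections realized by monomials (decomposing $D=l_1+\sum_i\beta_i\,\overline{z_i}$ and multiplying by the canonical section of $\bundlefromdivisor{l_1}$) is more explicit than the paper's terse appeal to the lemma, and is correct. Once you plug in the missing reference for the topology coincidence, the rest of your argument goes through.
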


This is a simple consequence of
\autoref{lem:4}\ref{item:25}
via standard arguments on complete rings,
as the proof below shows.

\begin{proof}[Proof of \autoref{prop:1}]
Let \(m\) denote the maximal ideal of the function ring
of the universal abelian cover.
The topology of the ring is given by
the \(m\)-adic filtration, ie the powers of \(m\) 
as a neighbourhood basis of \(0\).
As a preliminary,
we show that
the divisorial filtration
gives the same topology and thus the same completion,
ie every ideal of either filtration contains
an ideal from the other filtration.
Clearly, every ideal of the divisorial filtration contains
a large power of the maximal ideal
since all functions in the latter ideal have large vanishing orders
on all the exceptional curves.

We now prove the other direction.
By \cite[(3.2)]{MR713236},
there exists a non-zero effective cycle \(a \in \effectivecycles\)
such that for \(k \geq 2\)
\begin{equation*}
  {\zerocohom{\widetilde{Y}, \bundlefromdivisor[{\widetilde{Y}}]%
      {- \ApplyFunction{{\widetilde{c}}^*}{a}}}}^{\otimes k} \longrightarrow
  \zerocohom{\widetilde{Y}, \bundlefromdivisor[{\widetilde{Y}}]{- k \cdot \ApplyFunction{{\widetilde{c}}^*}{a}}}
\end{equation*}
is onto.
Hence \(\FiltrationLevel{ka} \subseteq {\FiltrationLevel{a}}^k\) lies in
the \(k\)th power of the maximal ideal.

We now turn the proof of the proposition.
Recall that the levels of the divisorial filtration
are isomorphic to
natural line bundles.
Hence applying \autoref{lem:4}\ref{item:25} to
the eigenspaces of the levels show that
that the eigenspaces of factors of two comparable levels
are generated by monomials.
In particular, the factor of any two comparable levels is also
generated by monomials.
Here we call two levels \emph{comparable}
if one is contained in the other,
as usual for partial orders.

We prove that the ring homomorphism is surjective.
By standard completion arguments,
it is enough to prove that the maximal ideal is mapped surjectively
onto \(\smartfrac{m}{m^2}\).
By extension of the constant functions, this is equivalent to
the homomorphism being surjective onto \(\smartfrac{\FiltrationLevel{0}}{m^2}\),
where the level \(\FiltrationLevel{0}\) is the whole function ring.
As we have proved,
there is a level \(\FiltrationLevel{l'}\) of the divisorial filtration
contained in \(m^2\),
hence it is enough to prove that the homomorphism is surjective onto
\(\smartfrac{\FiltrationLevel{0}}{\FiltrationLevel{l'}}\).
This is indeed the case,
as this factor is a factor of
two comparable levels of the divisorial filtration.

Finally, we show that every level \(\FiltrationLevel{l'}\) of
the divisorial filtration as an ideal is generated by monomials,
which obviously shows that
every element of the level is a convergent power series in
the monomials lying in the level.
The level is a finitely generated ideal,
hence by Nakayama's lemma
it is enough to show that \(\smartfrac{\FiltrationLevel{l'}}{m\FiltrationLevel{l'}}\)
is generated by monomials.
We prove this by repeating the argument in the previous paragraph:
there is a level \(\FiltrationLevel{a}\) of the divisorial filtration
contained in \(m\FiltrationLevel{l'}\), and
the factor of two levels \(\smartfrac{\FiltrationLevel{l'}}{\FiltrationLevel{a}}\)
is generated by monomials,
hence so is \(\smartfrac{\FiltrationLevel{l'}}{m\FiltrationLevel{l'}}\).
\end{proof}

\subsection{End Curve Condition implies Monomial Condition}
\label{sec:end-curve-monomial}

In all known proofs that a singularity is splice-quotient,
it is an important step to show that the resolution graph
satisfies the \hyperref[item:21]{Monomial Condition}.
In this subsection, we do this step for the \hyperref[th:3]{End Curve Theorem \ref*{th:3}}.

\begin{proposition}\label{prop:end-to-monom}
  If a good resolution of an isolated normal surface singularity
  with rational homology sphere link
  satisfies the \hyperref[def:1]{End Curve Condition},
  then it also satisfies the \hyperref[item:21]{Monomial Condition}
  for every finite set of end curves
  intersecting the exceptional divisor in pairwise disjoint points
  and every exceptional curve being intersected by at least two
  exceptional curves and end curves altogether.
\end{proposition}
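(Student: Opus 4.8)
The plan is to recast the Monomial Condition for a branch as the existence of one effective divisor of a single natural line bundle, and then to build that divisor by descending the branch.

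Fix a vertex $v$ with at least three branches and a branch $C$ of $v$; write $C_{\mathrm{exc}}$ for the exceptional curves lying in $C$ and $u$ for the one adjacent to $v$. I would first record the reformulation: the pair $(v,C)$ satisfies the Monomial Condition if and only if the natural line bundle $\bundlefromdivisor{-E_v^*}$ admits an \emph{effective} divisor supported on $C_{\mathrm{exc}}$ together with the fixed end curves meeting $C_{\mathrm{exc}}$. Indeed, given such a divisor $D=\sum_x\alpha_x E_x+\sum_w\alpha_w w$ (with $x$ over $C_{\mathrm{exc}}$, $w$ over the fixed end curves in $C$), comparing first Chern classes gives $\sum_w\alpha_w E_w^*-\sum_x\alpha_x E_x=E_v^*$, which is exactly \eqref{eq:24}; since moreover $\sum_w\alpha_w E_w^*-E_v^*=\sum_x\alpha_x E_x\in\effectivecycles$, the monomial $\prod_w z_w^{\alpha_w}$ of end curve sections, pushed through the inclusions $\zerocohom{\bundlefromdivisor{-\sum_w\alpha_w E_w^*}}\subseteq\zerocohom{\bundlefromdivisor{-E_v^*}}\subseteq\functionring{Y,o}$, is an admissible monomial for $C$ with divisor $D$ (via \autoref{lem:3}); conversely an admissible monomial produces back such a $D$. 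Here one uses \autoref{lem:1}\ref{item:25}, so that every effective divisor on exceptional and end curves really is a divisor of the natural line bundle with that Chern class. Thus it suffices to construct $D$.

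I would construct $D$ by induction on the number of vertices, using the strengthened statement: for every resolution satisfying the End Curve Condition, every exceptional curve $v$, and every branch $C$ of $v$, the bundle $\bundlefromdivisor{-E_v^*}$ has an effective divisor supported on $C_{\mathrm{exc}}$ and the fixed end curves meeting it. Restrict $\bundlefromdivisor{-E_v^*}$ to the subtree $\Delta$ spanned by $v$ and $C$: by \autoref{lem:1}\ref{item:9} and \ref{item:24}, $\Delta$ again satisfies the End Curve Condition — its end curves being the fixed end curves meeting $\Delta$ enlarged by the exceptional curves of the other branches of $v$ that are adjacent to $v$ — and $\restrictto{\bundlefromdivisor{-E_v^*}}{\Delta}$ is the natural line bundle on $\Delta$ whose first Chern class pairs to $1$ with $E_v$ and to $0$ with every other exceptional curve of $\Delta$; in $\Delta$ the vertex $v$ has degree $1$. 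An effective divisor of this restricted bundle supported on $C_{\mathrm{exc}}$ and the fixed end curves meeting it (hence avoiding $E_v$ and all end curves at $v$) extends, by the extensibility clause of \autoref{lem:1}\ref{item:24}, to an effective divisor of $\bundlefromdivisor{-E_v^*}$ on the whole graph; since this divisor vanishes on the other branches of $v$, each extension step there prescribes a vanishing sum of new multiplicities, so the extension may be taken to vanish there too, i.e.\ to stay supported on $C$. When $\Delta$ is a proper subtree this closes the induction; the remaining case is $\Delta=\Gamma$, i.e.\ $v$ is already a leaf and all other branches of $v$ are single end curves.

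For $v$ a leaf I would descend the branch from $u$. Since $\scalarproduct{\Chernclass{\bundlefromdivisor{-E_v^*}}}{E_v}=1\ge 0$, the Main Lemma \autoref{lem:4}\ref{item:11} identifies the factor $\factorsectionsmodulo{\bundlefromdivisor{-E_v^*}}{-E_u}$ with its analogue on $\Gamma\setminus v$, and combined with \autoref{lem:4}\ref{item:10} on $\Gamma\setminus v$ (whose End Curve Condition is guaranteed by \autoref{lem:1}\ref{item:9}, with $v$ now serving as an end curve at $u$) this yields a section of $\bundlefromdivisor{-E_v^*}$ whose divisor is supported below $u$. If $u$ carries a fixed end curve — in particular when $C_{\mathrm{exc}}=\{u\}$, which the End Curve Condition then forces — that divisor is the one we want, being the appropriate multiple of the end curve plus the cycle $\sum_w\alpha_w E_w^*-E_v^*\in\effectivecycles$ on $C_{\mathrm{exc}}$; otherwise one recurses into the sub-branches of $u$, first producing admissible monomials there and then assembling them, the numerical glue being the standard branch identity that expresses $E_v^*$ on the subtree below $u$ as a positive rational multiple of $E_u^*$ computed in that subtree. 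The main obstacle is precisely this descent: keeping the constructed divisor effective and supported strictly inside $C$ (off $E_v$ and off the end curves at $v$), and verifying that the end-curve multiplicities coming from the sub-branches combine into a single monomial admissible for $C$. By contrast, the reformulation and the reduction to $v$ being a leaf are routine once one has \autoref{lem:1} and the dictionary between sections of natural line bundles and eigen-functions on the universal abelian cover.
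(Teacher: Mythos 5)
Your reformulation of the Monomial Condition in terms of an effective divisor of $\bundlefromdivisor{-E_v^*}$ supported on the branch and its end curves is correct, and the reduction to the subgraph spanned by $v$ and the branch agrees with the paper. The difficulty you flag at the end, however, is precisely the part the proof has to actually do, and as written you have not closed it.

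The concrete gap is that you never establish the \emph{existence} of the section you are after. Working in $\bundlefromdivisor{-E_v^*}$ and applying \autoref{lem:4}\ref{item:10} or \ref{item:11} relative to $E_u$ (your $u$, the paper's $w$) only controls the multiplicity at $E_u$; it gives you generators whose divisors live on all exceptional curves and \emph{all} fixed end curves, including the ones attached to $E_v$ from the other branches, and there is nothing in your argument forcing any of those divisors to land inside $C$. Worse, after restricting to $\Gamma\setminus v$, the curve $E_v$ itself becomes an admissible end curve at $u$, so the divisors produced by \autoref{lem:4}\ref{item:10} on the smaller graph may use $E_v$ as a component. Your proposed ``recursion into sub-branches'' is supposed to fix this, but you give no mechanism for combining the sub-branch data into a single divisor of $\bundlefromdivisor{-E_v^*}$ while keeping it off $E_v$ and off the foreign end curves, and you yourself call this the main obstacle.

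What the paper does to close this gap is work in the twisted bundle $\bundlefromdivisor{-E_v^*-E_w}$ rather than $\bundlefromdivisor{-E_v^*}$, and exploit two facts. First, $\scalarproduct{\Chernclass{\bundlefromdivisor{-E_v^*-E_w}}}{E_v}=0$, so any effective divisor of this bundle supported on exceptional and end curves and with zero multiplicity at $E_v$ must, by positivity of intersection, avoid \emph{every} curve meeting $E_v$; this is exactly what forces the divisor into the branch and kills the end curves from the other branches. Second, to know that such a divisor exists via \autoref{lem:4}\ref{item:10} with $l=E_v$, one must show the factor $\factorsectionsfromdivisor{-E_v^*-E_w}{E_v}$ is nonzero. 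The paper produces a nonzero element of it by a small but essential linear-algebra trick: take two end curve sections $s_1,s_2$ of $\bundlefromdivisor{-E_v^*}$ from the other branches meeting $E_v$ in distinct points; the one-dimensionality of $\zerocohom{\bundlefromdivisor[{E_w}]{-E_v^*}}$ gives an $\alpha$ with $s_1-\alpha s_2\in\zerocohom{\bundlefromdivisor{-E_v^*-E_w}}$, and the distinct intersection points guarantee $s_1-\alpha s_2$ does not vanish identically on $E_v$. This linear combination, and the passage to the bundle with $\scalarproduct{\Chernclass{\cdot}}{E_v}=0$, are the missing ideas; without them your descent does not produce the admissible monomial.
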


To better understand the proof, first we present the idea behind it.
First, the Monomial Condition is local, ie it holds for a branch of a vertex
if and only if it holds in the subgraph spanned by the branch and the vertex.
Restricting to this subgraph has the advantage that
the other branches of the vertex become just end curves,
hence we have admissible monomials for them.
The splice diagram equations suggest that
an admissible monomial for the branch is
a linear combination of admissible monomials of two other branches
up to higher degree terms.
So we will take a suitable linear combination of
admissible monomials (ie end curve sections) of two other branches and
cut out the higher degree terms
to obtain an admissible monomial.

This is done precisely as follows.
\begin{proof}[Proof of \autoref{prop:end-to-monom}]
We verify the Monomial Condition for every branch of
every vertex \(v\)
having at least \(3\) branches.

The Monomial Condition for a branch in the whole graph
is equivalent to the Monomial Condition for the branch in
the subgraph spanned by \(v\) and the branch,
since the required equation~\eqref{eq:24}
means the same condition for both graphs.
(The easiest way to see this is that
an equivalent form of \eqref{eq:24} is that
both of its side has the same intersection number with
the exceptional curves of the graph.
The intersection numbers with exceptional curves outside the subgraph is
always \(0\) for both sides,
hence only the subgraph matters.)

Hence we shall work with this subgraph,
and our notations will be implicitly used for this subgraph.

The advantage of using this subgraph is that
\(v\) inherits end curves
cut out by the other branches of \(v\),
see \autoref{lem:1}\ref{item:9}.
Since \(v\) has at least \(3\) branches,
it has at least \(2\) end curves
intersecting it in different points
in the subgraph.

We select two end curve sections \(s_1\) and \(s_2\)
of \(\bundlefromdivisor{- E_v^*}\)
whose end curves intersect \(E_v\)
in different points.
Obviously, these sections lie outside of
\(\zerocohom{\bundlefromdivisor{- E_v^* - E_w}}\)
where \(w\) is the vertex of the branch adjacent to \(v\).

In particular, their images under the embedding
\begin{equation*}
  \factorsectionsfromdivisor{- E_v^*}{E_w}
  \hookrightarrow \zerocohom{\bundlefromdivisor[{E_w}]{- E_v^*}} \simeq \setC
\end{equation*}
are non-zero.
Since the codomain is \(1\) dimensional,
there exists a non-zero complex number \(\alpha\)
such that
\begin{equation}\label{eq:35}
  s_1 - \alpha s_2 \in \zerocohom{\bundlefromdivisor{- E_v^* - E_w}}.
\end{equation}
(This is supposed to be an admissible monomial of the branch plus higher degree terms.)
Since the end curves (ie the divisors) of \(s_1\) and \(s_2\) intersect \(E_v\)
in \emph{different} points,
the linear combination \(s_1 - \alpha s_2\)
is not zero on \(E_v\).
Thus, \(s_1 - \alpha s_2\) is a non-zero element of the factor
\(\factorsectionsfromdivisor{- E_v^* - E_w}{E_v}\).
Hence by \autoref{lem:4}\ref{item:10},
the line bundle
\(\zerocohom{\bundlefromdivisor{- E_v^* - E_w}}\)
has a non-zero section whose divisor is supported by
the end curves and exceptional curves other than \(E_v\).
Since \(\scalarproduct{E_v}{\Chernclass{\bundlefromdivisor{- E_v^* - E_w}}} = 0\),
the divisor cannot contain
any curves intersecting \(E_v\),
so the divisor is supported by the branch and its end curves
(other than \(E_v\)).
In the larger bundle \(\bundlefromdivisor{- E_v^*}\),
the section is still supported on the branch and its end curves,
hence the coefficients of the divisor are
non-negative numbers \(\alpha_u\) for
the end curves and exceptional curves of branch, respectively,
satisfying~\eqref{eq:24}.
\end{proof}

\subsection{Proof of the End Curve Theorem}
\label{sec:proving-end-curve}

In this subsection,
we prove the End Curve Theorem.
Our arguments are taken from \cite[Section~5]{Ouac-c},
and adapted to the use of \autoref{th:06} instead of the
properties of special singularities.

\begin{proof}[Proof of \autoref{th:3}]
We are given a ring homomorphism
from the ring of convergent power series
to the ring of functions of
the universal abelian cover of a singularity
satisfying the \hyperref[def:1]{End Curve Condition}.
The variables are mapped to end curve sections.
What we have to show is that
this homomorphism is surjective with
kernel generated by splice diagram equations.

The surjectivity is part of \autoref{prop:1}, which is already proven.
It follows at once that the kernel is a prime ideal of dimension \(2\).
Since any ideal generated by splice diagram equations is also prime of
dimension \(2\) by \autoref{th:6}, it is enough to show that the kernel
contains a system of splice diagram equations.

We fix a vertex \(v\) with at least \(3\) branches.
We are going to find splice diagram equations \eqref{eq:9} for this vertex
as follows.

First of all, we choose admissible monomials \(M_{v,C}\) for the branches
arbitrarily.
The existence of admissible monomials was shown
in \autoref{prop:end-to-monom}.

By \autoref{lem:2} via \eqref{eq:22},
the monomials having \(v\)-degree at least \(- \scalarproduct{E_v^*}{E_v^*}\)
are exactly the monomials in \(\FiltrationLevel{E_v^*}\).
The admissible monomials lie in
the subspace of
the \(\ApplyFunction{\theta}{\ClassOf{E_v^*}}\)-eigenspace of \(\FiltrationLevel{E_v^*}\),
which is isomorphic to
\(\zerocohom{\bundlefromdivisor{- E_v^*}}\).
Under such an isomorphism,
the vector space of power series with higher \(v\)-degree
and in the same eigenspace of the group action is
the subspace \(\zerocohom{\bundlefromdivisor{- E_v^* - E_v}}\).
So we have to show that
the admissible monomials of \(v\)  satisfy \(\degree{v} - 2\)
linear equations in the factor
\(\factorsectionsfromdivisor{- E_v^*}{E_v}\)
whose matrix of coefficients has all its maximal minors non-degenerate.
The latter condition means that
the subspace generated by all the admissible monomials in the factor
is generated by any two of the admissible monomials.

To see that this condition really holds,
we consider the embedding
\begin{equation}\label{eq:26}
  \factorsectionsfromdivisor{- E_v^*}{E_v} 
  \hookrightarrow \zerocohom{\bundlefromdivisor[{E_v}]{- E_v^*}} \simeq \setC^2.
\end{equation}
The space on the right-hand side has dimension \(2\).
Any non-zero section of it has
exactly one zero point on \(E_v\).
Any two non-zero sections having different zero points
are linearly independent,
and therefore form a basis of it.
Hence any two of the admissible monomials form a basis of it,
thus any two of the admissible monomials generate
the subspace generated by all of the admissible monomials,
as claimed.
\end{proof}

\section{Counterexample for Hilbert\textendash Samuel function}
\label{sec:count-hilb-funct}

In this section
we give an example that the Hilbert\textendash Samuel function
of a splice-quotient singularity is not determined by the resolution graph.
Let \(m\) denote the maximal ideal of an isolated singularity.
Recall that the Hilbert\textendash Samuel function of the singularity is the power series
\begin{equation}
  \label{eq:20}
  \sum_{k=0}^{\infty} \dim \smartfrac{m^k}{m^{k+1}} \cdot t^k \in \setZ[[t]].
\end{equation}

\subsection{The singularities}
\label{sec:singularities}

In this subsection,
we present the example and
state the Hilbert\textendash Samuel functions.
The computation of the functions are left to \autoref{sec:comp-hilb-funct}.

The example is a star-shaped graph:
ie there is a central vertex, and the branches of the vertex are
simply paths.
In our example,
there are four branches whose determinant we denote by
\(a_1\), \(a_2\), \(b\) and \(c\).
We require these four numbers to be pairwisely relative prime.
For every such quadruple of numbers,
there is a unique graph whose determinant is \(1\).
Our example will be this unique graph,
so the universal abelian cover will be the resolution itself
(this is just for simplifying computations).

Now we write up a possible system of splice-quotient equations,
which define the singularity:
\begin{equation}
  \label{eq:5}
  \begin{aligned}
    x_1^{a_1} + y^b +   z^c &= 0, \\
    x_2^{a_2} + y^b + \gamma z^c &= 0,
  \end{aligned}
\end{equation}
where \(\gamma\) is a complex number different from \(0\) and \(1\).
Its Hilbert\textendash Samuel function is
\begin{equation}
  \label{eq:15}
  \smartfrac{{( 1 - t^{a_1} )} {( 1 - t^{a_2} )}}{{( 1 - t )}^{4}}
\end{equation}

By adding higher degree terms to the equations,
we obtain another singularity with the same resolution graph:
\begin{equation}
  \begin{aligned}
    \label{eq:7}
    x_1^{a_1} + y^b +   z^c                   & = 0, \\
    x_2^{a_2} + y^b + \gamma z^c + x_1^{a_1 - 1} x_2^i & = 0.
  \end{aligned}
\end{equation}
Here we impose the following restrictions on the numbers:
\begin{subequations}
  \begin{align}
    \label{eq:10}
    2 \leq a_1 &< a_2 < b < c, \\
    \label{eq:12}
    i a_1 &> a_2, \\
    \label{eq:11}
    a_1 - 1 + i &< a_2, \\
    \label{eq:13}
    b + a_1 - 1 &> 2 a_2 - i.
  \end{align}
\end{subequations}
The inequality of \eqref{eq:12} expresses that
the added term is of higher degree.
The other inequalities are of technical nature.
In particular,
the sole purpose of \eqref{eq:13} is to simplify computations.

The Hilbert\textendash Samuel function of the latter singularity is
\begin{equation}
  \label{eq:16}
  \frac{%
    \frac{t - t^{a_1 - 1}}{1 - t}
    \cdot
    \frac{1 - t^{a_2}}{1 - t}
    +
    \frac{1 - t^{2 a_2 - i}}{1 - t}
    + t^{a_1 - 1}
    \frac{1 - t^{i}}{1 - t}
  }{{( 1 - t )}^{2}}
\end{equation}

\subsection{Computation of Hilbert\textendash Samuel function}
\label{sec:comp-hilb-funct}

In this subsection,
we verify the Hilbert\textendash Samuel functions~\eqref{eq:15} and \eqref{eq:16}.

Recall that the \emph{codegree} of a power series is
the minimum of the degrees of its terms.
Obviously,
a function is in the \(k\)th power of the maximal ideal
if and only if
it is represented by a power series of codegree at least \(k\).

The idea is to find a good basis for the function rings:
ie a collection of monomials such that every function can be uniquely written as
a convergent power series having only these monomials as its terms.
Furthermore,
these unique representation should have the minimal possible codegree
so that it reflects in which power of maximal ideal the function is contained.

Given such a collection of monomials,
the monomials of degree \(k\) obviously form
a basis of the factor \(m^k / m^{k+1}\),
leading to a combinatorial formula for the Hilbert\textendash Samuel function.

We claim that for the equations \eqref{eq:5},
a good collection of monomials are the ones not divisible by
\(x_1^{a_1}\) or \(x_2^{a_2}\).
For the equations \eqref{eq:7},
a good collection is formed by the monomials divisible by none of
\(x_1 x_2^{a_2}\),
\(x_1^{a_1}\),
\(x_1^{a_1 - 1} x_2^{i}\) and
\(x_2^{2 a_2 - i}\).

The claimed form of the Hilbert\textendash Samuel function
follows easily from these claims.

We verify our claim only for the equations \eqref{eq:7},
since the verification for \eqref{eq:5} is similar and easier.
We call the monomials outside of the good collection to be \emph{forbidden}.

The verification is based on expressing the forbidden monomials in terms of
the allowed ones:
\begin{subequations}\label{eq:23}
  \begin{align}
    \label{eq:6}
    x_1 x_2^{a_2} &=
    \begin{multlined}[t]
      (x_2^i - x_1) y^b + (x_2^i - \gamma x_1) z^c \\
      - x_2^i (x_1^{a_1} + y^b + z^c)
      + x_1 (x_2^{a_2} + y^b + \gamma z^c + x_1^{a_1 - 1} x_2^i)
    \end{multlined}
    \\
    \label{eq:8}
    x_1^{a_1} &=
    \begin{multlined}[t]
      - y^b - z^c \\
      + (x_1^{a_1} + y^b + z^c)
    \end{multlined}
    \\
    \label{eq:17}
    x_1^{a_1 - 1} x_2^{i} &=
    \begin{multlined}[t]
      - x_2^{a_2} - y^b - \gamma z^c \\
      + (x_2^{a_2} + y^b + \gamma z^c + x_1^{a_1 - 1} x_2^i)
    \end{multlined}
    \\
    \label{eq:18}
    x_2^{2 a_2 - i} &=
    \begin{multlined}[t]
      (x_1^{a_1 - 1} - x_2^{a_2 - i} - x_1^{a_1 - 2} x_2^i) y^b +
      (\gamma x_1^{a_1 - 1} - \gamma x_2^{a_2 - i} - x_1^{a_1 - 2} x_2^i) z^c \\
      + x_1^{a_1 - 2} x_2^i (x_1^{a_1} + y^b + z^c) \\
      + (x_2^{a_2 - i} - x_1^{a_1 - 1}) (x_2^{a_2} + y^b + \gamma z^c + x_1^{a_1 - 1} x_2^i)
    \end{multlined}
  \end{align}
\end{subequations}
In these equations,
the first line contains the equivalent expression without
the forbidden monomials, and of non-smaller codegree
thanks to \eqref{eq:11} and \eqref{eq:13}.
Further lines contain a linear combination of the defining equations
to make the equation hold, which is just for the reader's convenience.

We now verify that every function is represented by a unique power series
not containing a term divisible by the mentioned monomials.
By the Weierstrass Preparation Theorem,
the ring
\(\setC\{x_1, x_2, y, z\} / (x_2^{a_2} + y^b + \gamma z^c + x_1^{a_1 - 1} x_2^i)\)
is a free module over \(\setC \{x_1, y, z\}\) with basis
\(1\), \(x_2\),\dots, \(x_2^{a_2 - 1}\).
Similarly,
the ring
\(\setC \{x_1, y, z\} / (x_1^{a_1} + y^b + z^c)\)
is a free module over \(\setC \{ y, z\}\) with basis
\(1\), \(x_1\),\dots, \(x_1^{a_1 - 1}\).
It follows that
\(\setC\{x_1, x_2, y, z\} / (x_1^{a_1} + y^b + z^c,
                       x_2^{a_2} + y^b + \gamma z^c + x_1^{a_1 - 1} x_2^i)\)
is a free module over \(\setC \{ y, z\}\) with basis
\(x_1^{j_1} x_2^{j_2}\) for \(0 \leq j_1 \leq a_1 - 1\) and \(0 \leq j_2 \leq a_2 - 1\).
In this basis,
the monomials \(x_1^{a_1 - 1} x_2^{j}\) for \(i \leq j \leq a_2 - 1\)
can be replaced by \(x_2^{a_2 + j -i}\) by~\eqref{eq:17}.
This gives us the claimed basis of the function ring.

We still need to check that the unique representative is of minimal codegree.
We do this by transforming any representative \(f_0\) into the unique one omitting
the forbidden monomials such that the codegree does not decrease.
The main idea behind this is to replace forbidden monomials with another
expressions
of non-smaller codegree.
Equations \eqref{eq:23} provide these replacements.

In more details,
we first eliminate the monomials divisible by \(x_2^{2 a_2 - i}\) via
\eqref{eq:18} and the Weierstrass Preparation Theorem,
which does not decrease the codegree.
Then we eliminate the terms divisible by \(x_1 x_2^{a_2}\) via \eqref{eq:6}.
The third step is to eliminate the terms divisible by \(x_1^{a_1}\) via
\eqref{eq:8} and the Weierstrass Preparation Theorem.
The final step is the elimination of monomials
divisible by \(x_1^{a_1 - 1} x_2^{i}\) via\eqref{eq:17}.
The reader can easily verify that during these steps
forbidden monomials claimed to be
eliminated in previous steps
cannot reappear,
so finally we have obtained a representative without forbidden monomials
and with codegree at least the original.
\bibliographystyle{amsplainurl}\bibliography{hivatkozas}
\end{document}